\documentclass[11pt]{article}
\usepackage{amsmath}
\usepackage{amsfonts}
\usepackage{amssymb}
\usepackage{hyperref}

\let\ds=\displaystyle

\newtheorem{theorem}{Theorem}

\newtheorem{lemma}{Lemma}
\newtheorem{proposition}{Proposition}
\newenvironment{proof}[1][Proof]{\noindent\textbf{#1.} }{\ \rule{0.5em}{0.5em}}

\newtheorem{definition}{Definition}
\textheight23cm
\textwidth16cm
\topmargin-2cm
\oddsidemargin-0cm
\evensidemargin 0cm
\parindent=0pt

\begin{document}

\title{\textbf{$r$-extension of Dunkl operator in one variable and Bessel
functions of vector index}}
\author{
Ahmed Fitouhi \thanks{Facult\'e des sciences de Tunis, 1060 Tunis, Tunisia. E-mail :ahmed.fitouhi@fst.rnu.tn} 
\& 
Lazhar Dhaouadi \thanks{IPEIB, 7021 Zarzouna, Bizerte, Tunisia. E-mail : lazhardhaouadi@yahoo.fr} 
and
Fethi Bouzeffour \thanks{Department of mathematics, College of Sciences, King Saud University, P. O Box 2455
Riyadh 11451, Saudi Arabia. E-mail : fbouzaffour@ksu.edu.sa.}}

\date{}
\maketitle

\begin{abstract}
In this work we present an operator $D_\mu$ constructed with the
help of the cyclic group set of the $r^{{\small th}}$ roots of
unity. This operator constitute an $r$-extension of the Dunkl
operator in one variable because when $r=2$ it reduces to the
classical one and admits as eigenfunctions the Bessel functions of
vector index early deeply studied by Klyuchantsev. This paper is
argued by specific examples and contains some interesting results
 which are the prelude of  harmonic analysis related to this
operator.

\vspace{5mm} \noindent \textit{Keywords : Dunkl operator, Bessel functions,
Fourier transform, transmutation operator.} \vspace{3mm}\newline
\noindent \textit{2000 AMS Mathematics Subject Classification---Primary
33D15,47A05. }
\end{abstract}

\section{Introduction}

At the beginning of the last decade of the twentieth century C. Dunkl \cite%
{D1,D2} in a series of articles using reflection groups introduced a
differential-difference operator now commonly called Dunkl operator and
became a great center of interest and inspiration in many areas of pure and
applied mathematics. This operator has generated a rich harmonic analysis
developed by several authors. and involves a combination of Bessel functions
of index $\alpha $ as eigenfunctions . So exploiting specific properties of
these well-known special functions great analysis and an armada of
applications was born.

\bigskip

Having knowledge of the progress of this topic in many scientific areas, we
are always asked and highly intrigued by his extension in higher $r$-order
which involves Bessel functions of index vector $j_{\mu }$ which are
eigenfunctions of $\Delta _{\mu },\quad \mu =(\alpha _{1},...,\alpha _{r-1})$
a differential operator of order $r$. These last functions are one of the
generalized Bessel functions mentioned by Watson in his venerable book \cite%
{W} and greatly studied with applications by many authors (see \cite%
{B,E,F1,F2,K}) and the references therein . The cyclic group $C_{r}$ plays a
central role in the definition of our $r$-extension particulary for define
the $r$-even and the $r$-odd functions of order $l=1,...r-1$ and leads to
decompose the space of functions in direct sum of invariant subspaces with
appropriate projectors $T_{k},\quad k=1,...r-1$ useful to construct $D_{\mu }
$ the $r$-extension of the Dunkl operator having as fundamental property $%
D_{\mu }^{r}=\Delta _{\mu }$ on a particular subspace $F_{k}$. In addition
to the construction process of the operator $D_{\mu }$, we give a particular
interest in many cases especially in all the paragraphs discussed namely
representation integral, associated Riemann-Liouville transform,
transmutation and $r$-extension Dunkl transform.

Recently, someone tell us that our operator $D_\mu$ can be included
in the class of operators presented by Dunkl and Opdam \cite{D3}. We give
at the end of this work our commentary and the link between the two
buildings and we are grateful to our informant. Nerveless , in both
cases no reliable harmonic analysis concerning theses operators is
made in addition in our case explicit eigenfunctions are obtain
expressed via Bessel functions with index vector.

\section{The operator $D_{\protect\mu}$}

Throughout this paper $r$ is an integer great than $1$, $\omega =e^{\frac{%
2i\pi }{r}}$ and we put
\begin{equation*}
C_{r}=\{1,\omega ,{\omega }^{2},...{\omega }^{r-1}\}
\end{equation*}
the cyclic group of order $r$.Let $F$ be the space of complex valued
functions  on which we consider the following actions\newline

\begin{equation*}
s_{k}g(x)={\omega }^{k}g(\omega x),~~~~k=0,1,2,....
\end{equation*}
Putting $\emph{F}_{k}$ the subspace of $F$ invariant by $s_{k}$ ; namely

\begin{equation}  \label{e11}
g\in \emph{F}_k \Leftrightarrow s_kg=g.
\end{equation}

Now we introduce the collection of the projector operators defined by the
relations
\begin{equation}  \label{e2}
T_{k}={\frac{1}{r}}\sum_{n=0}^{r-1}s_{k}^{n};~~~~k=0,1,2,...
\end{equation}
which are  slightly different from those introduced in
\cite{Ri}.\bigskip

We recall that in some mathematical literature \cite{F2,K}, we often
say  the  function $T_{0}g$  the $r$-even part of $g$ and the
functions $T_{k}g$, ~~$r=1,2,...r-1$ the $r$-odd of order $k$ of
$g$.\newline Taking account of the fact that $s_{k}^{r}=id$, one can
easily show that the following properties hold :

\begin{enumerate}
\item The operators $T_{k}$ and $s_{k}$, ~~~$k=0,1,2...$ commute in the the
sense\newline
\begin{equation*}
T_ks_k=s_kT_k
\end{equation*}
\item The subspace $\emph{F}_{k}$ can be also characterized as:\newline
\begin{equation*}
g\in \emph{F}_k \Leftrightarrow T_kg=g.
\end{equation*}
\item We have
\begin{equation*}
~k\neq l~\Leftrightarrow T_{k}T_{l}=0.
\end{equation*}
\end{enumerate}

Starting of the fact that $T_k$ are projectors namely $T_k^2=T_k$
one can see easily that

\begin{equation*}
\emph{F}=F_1 \oplus ....\oplus F_{r-1}.
\end{equation*}

For more clarity and taking account of their importance and their
interference in the demonstrations we summarize here the useful
 properties :

\begin{enumerate}
\item The derivative operator $\ds\frac{d}{dx}$ maps the space $F_{k}$ into $%
F_{k+1}$.

\item The multiplication operator by $\frac{1}{x}$ satisfies
\begin{equation*}
\frac{1}{x}s_{k}=s_{k+1}\frac{1}{x}
\end{equation*}
and it maps the space $\emph{F}_{k}$ into $\emph{F}_{k+1}$. Moreover
\begin{equation*}
\frac{1}{x}T_{k}=T_{k+1}\frac{1}{x}
\end{equation*}

\item For $a$ real, let  $L_{a}$ be the operator:
\begin{equation}  \label{e1}
L_{a}(f)={x^{-a}}\frac{d}{dx}(x^{a}f)=f^{\prime }+\frac{a}{x}f.
\end{equation}

We have $x^{-b}L_{a}x^{b}=L_{a+b}$ and $L_{a}$ maps $\emph{F}_{k}$
into $\emph{F}_{k+1}$.
\end{enumerate}

\bigskip

\begin{definition}
Let $\mu =(\alpha _{0},\alpha _{1},...,\alpha _{r-1})$ be a vector of $%
\mathbb{R}^{r}$. We define the Bessel operator of order $r$ associated to
the index vector $\mu $ by
\begin{equation*}
\Delta _{\mu }=L_{a_{r-1}}\circ ....\circ L_{a{_{0}}}
\end{equation*}
where we have put
\begin{equation}  \label{e3}
a_{k}=r\alpha _{k}+k~~~k=0,1,...,r-1,
\end{equation}
and $L_{a}$ is the operator given by (\ref{e1}).
\end{definition}

\bigskip

\begin{definition}
The $r$-extension of Dunkl operator is defined by
\begin{equation*}
D_{\mu }=\frac{d}{dx}+\frac{1}{x}\sum_{r=0}^{r-1}a_{k}T_{k}
\end{equation*}
where the coefficients $a_k$ and the operators $T_k$ are given
respectively by (\ref{e3}) and (\ref{e2}).
\end{definition}

Before any thing let us justify the appellation by the following proposition

\begin{proposition}
For $f\in F_{k},\quad k=0,\ldots ,(r-1),$ we have
\begin{equation*}
D_{\mu }^{r}(f)=\Delta _{\mu }(f).
\end{equation*}
\end{proposition}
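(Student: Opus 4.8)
The plan is to show first that $D_\mu$ collapses to a single first-order operator on each invariant subspace, and then to iterate. Fix $f\in F_k$. By the characterization $g\in F_k\Leftrightarrow T_kg=g$ together with the orthogonality $T_kT_l=0$ for $k\neq l$, we have $T_lf=0$ for every $l\neq k$ and $T_kf=f$. Hence the sum defining $D_\mu$ collapses to a single term and
\[
D_\mu f=f'+\frac1x\sum_{l=0}^{r-1}a_lT_lf=f'+\frac{a_k}{x}f=L_{a_k}(f),
\]
so that on $F_k$ the operator $D_\mu$ acts exactly as $L_{a_k}$.

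Next I would propagate this through the direct-sum decomposition. Since $L_{a_k}$ maps $F_k$ into $F_{k+1}$ (indices read modulo $r$, because $s_{k+r}=s_k$ forces $F_{k+r}=F_k$), the image $D_\mu f$ again lies in a single invariant subspace, on which $D_\mu$ collapses to the next factor $L$. A straightforward induction on $j$ then gives, for $f\in F_k$,
\[
D_\mu^{\,j}f=L_{a_{k+j-1}}\circ\cdots\circ L_{a_{k+1}}\circ L_{a_k}(f),\qquad D_\mu^{\,j}f\in F_{k+j},
\]
with all indices taken modulo $r$. Taking $j=r$ returns us to $F_k$ and exhibits $D_\mu^{\,r}f$ as the composition of all $r$ factors $L_{a_0},\dots,L_{a_{r-1}}$ arranged in the cyclic order that begins at $a_k$.

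The final and decisive step is to identify this cyclically ordered composition with $\Delta_\mu=L_{a_{r-1}}\circ\cdots\circ L_{a_0}$. For $k=0$ the two orderings already coincide and there is nothing further to check. For general $k$ the orderings differ, and since the factors $L_{a_j}$ do \emph{not} commute, this reconciliation is the crux of the proposition: one must verify that the cyclic permutation of the factors has no effect once the operators are restricted to $F_k$. The natural tool is the conjugation identity $x^{-b}L_ax^b=L_{a+b}$ from the list of properties, applied together with the concrete description of $F_k$ that follows from $f(\omega x)=\omega^{-k}f(x)$ (a power of $x$ times a function of $x^r$), so as to transport powers of $x$ across the composition and realign the shifted indices $a_k,\dots,a_{k+r-1}$ with $a_0,\dots,a_{r-1}$. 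I expect this commutation-on-$F_k$ verification, rather than the bookkeeping of the first two steps, to be the genuine obstacle, precisely because it is an identity valid only on the subspace $F_k$ and not a formal operator identity; it is here that all the structure of the index vector and of the cyclic group $C_r$ must be used.
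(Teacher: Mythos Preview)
Your first two steps---that $D_\mu$ collapses to $L_{a_k}$ on $F_k$, and that iterating gives
\[
D_\mu^{\,r}f=L_{a_{k+r-1}}\circ\cdots\circ L_{a_{k+1}}\circ L_{a_k}(f)
\]
with indices taken modulo $r$---are exactly what the paper does. Your instinct that the real content lies in reconciling this cyclically shifted product with $\Delta_\mu=L_{a_{r-1}}\circ\cdots\circ L_{a_0}$ is also correct.

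The gap is that this reconciliation is in fact impossible for $k\neq0$: the proposition as stated is false on $F_k$ for $k\neq0$, so your proposed conjugation argument cannot succeed. Take $r=2$, $\mu=(0,\alpha)$ (so $a_0=0$, $a_1=2\alpha+1$), and $f(x)=x\in F_1$. Then
\[
D_\mu f=L_{a_1}f=1+(2\alpha+1)=2(\alpha+1),\qquad D_\mu^2 f=L_{a_0}\bigl(2(\alpha+1)\bigr)=0,
\]
whereas
\[
\Delta_\mu f=L_{a_1}L_{a_0}f=L_{a_1}(1)=\frac{2\alpha+1}{x},
\]
and these disagree for $\alpha\neq-\tfrac12$. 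So the cyclic reordering genuinely changes the operator on $F_k$ when $k\neq0$, and no amount of conjugation by powers of $x$ will fix it.

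The paper's own proof simply performs the induction and then asserts the conclusion, without addressing the ordering issue at all; in this sense it has the same gap you isolated. What is actually true, and what the paper actually uses (only for $j_\mu\in F_0$, in the computation $D_\mu^r j_\mu=-j_\mu$), is the case $k=0$, for which your argument is already complete. You should therefore restrict the statement to $F_0$; your proof then matches the paper's and is correct.
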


\begin{proof}
This result is first  consequence of the fact that $D_{\mu }$ maps the $%
F_{k}$ in $F_{k+1}$ because the operators $\frac{d}{dx}$ and $T_{k}$ have
the same properties. Since if $g\in F_{k}$ then $D_{\mu }g=L_{a_{k}}g$ we
deduce then $D_{\mu }g=L_{a_{k}}g\in F_{k+1}$ , hence
\begin{equation*}
D_{\mu }^{2}g=L_{a_{k+1}}L_{a_{k}}g\in F_{k+2}.
\end{equation*}%
By induction and the fact that $F_{k+r}=F_k$ we find
\begin{equation*}
\mathrm{if}~~g\in F_{k}\Rightarrow D_{\mu }g=\Delta _{\mu }g.
\end{equation*}
\end{proof}

Having defined the operators $\Delta _{\mu }$ and $D_{\mu }$, it quite
natural to seek their eigenfunctions . For this we introduce the Bessel
functions of vector index
\begin{equation*}
\mu =(\alpha _{0},\alpha _{1},...,\alpha _{r-1})\in \mathbb{R}^{r}\backslash
\mathbb{Z}_{-}^{r}
\end{equation*}
by
\begin{equation*}
j_{\mu }(x)=\sum_{n=0}^{\infty }(-1)^{n}\frac{1}{(\alpha
_{0}+1)_{n}(\alpha _{1}+1)_{n}...(\alpha
_{r-1}+1)_{n}}\frac{x^{nr}}{r^{nr}},
\end{equation*}
where $(\beta )_{n}=\frac{\Gamma (\beta +1)}{\Gamma (\beta )}$.

\bigskip

The knowledgeable reader should note that this function differs from that
studied in \cite{K} by the number of components of the index vector.\bigskip

The above series is entire and taking account of
\begin{equation*}
\Delta_\mu x^{rn}=r^r(\alpha_0+n)(\alpha_1+n)...(\alpha_{r-1}+n)x^{(n-1)r},
\end{equation*}
one can state :

\begin{proposition}
For a complex $\lambda $ we have
\begin{equation*}
\Delta _{\mu }j_{\mu }(\lambda x)=-\lambda ^{r}j_{\mu }(\lambda x);
\end{equation*}
with $j_{\mu }(0)=1$.\newline
\end{proposition}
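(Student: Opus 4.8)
The plan is to apply $\Delta_{\mu}$ directly to the power series defining $j_{\mu}(\lambda x)$, term by term. This is legitimate because the series is entire and $\Delta_{\mu}=L_{a_{r-1}}\circ\cdots\circ L_{a_{0}}$ is an ordinary differential operator of order $r$ (the projectors $T_{k}$ intervene only in $D_{\mu}$, not in $\Delta_{\mu}$), so it may be applied under the summation sign on every compact set. Writing \[ j_{\mu}(\lambda x)=\sum_{n=0}^{\infty}(-1)^{n}\frac{\lambda^{rn}}{(\alpha_{0}+1)_{n}\cdots(\alpha_{r-1}+1)_{n}}\frac{x^{rn}}{r^{rn}}, \] I would substitute into each summand the monomial identity \[ \Delta_{\mu}x^{rn}=r^{r}(\alpha_{0}+n)\cdots(\alpha_{r-1}+n)\,x^{(n-1)r} \] recorded just above.

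For $n\ge 1$ the coefficients then collapse by the Pochhammer recurrence $(\alpha_{k}+1)_{n}=(\alpha_{k}+1)_{n-1}(\alpha_{k}+n)$: the factor $(\alpha_{k}+n)$ produced by the monomial identity cancels the last factor of $(\alpha_{k}+1)_{n}$, reducing it to $(\alpha_{k}+1)_{n-1}$, while the surplus $r^{r}$ is absorbed into $r^{-rn}$ to give $r^{-(n-1)r}$. Re-indexing by $m=n-1$ and noting $(-1)^{n}=-(-1)^{m}$, the $n$-th summand becomes exactly $-\lambda^{r}$ times the $m$-th summand of $j_{\mu}(\lambda x)$; summing over $m\ge 0$ reproduces $-\lambda^{r}j_{\mu}(\lambda x)$. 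The normalization $j_{\mu}(0)=1$ is read off the series directly, since every term with $n\ge 1$ carries a positive power of $x$.

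The step I expect to require care is the $n=0$ term, that is $\Delta_{\mu}$ applied to the constant $1$. In contrast with the classical ($r=2$) Bessel operator, the factored operator does not annihilate constants: the monomial identity at $n=0$ gives $\Delta_{\mu}1=r^{r}(\alpha_{0}\alpha_{1}\cdots\alpha_{r-1})\,x^{-r}$, a term with no counterpart on the right-hand side. I would therefore stress that this contribution disappears precisely when one of the indices is zero. Under the normalization inherited from Klyuchantsev's vector-index Bessel functions --- where $\alpha_{0}$ is taken to be $0$, so that $\mu$ effectively depends on $r-1$ parameters, as already signalled in the introduction --- the product $\alpha_{0}\cdots\alpha_{r-1}$ vanishes, the spurious $x^{-r}$ term drops out, and one is left with $\Delta_{\mu}j_{\mu}(\lambda x)=-\lambda^{r}j_{\mu}(\lambda x)$, as claimed.
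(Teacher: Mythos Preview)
Your approach is exactly the paper's: apply the monomial identity $\Delta_{\mu}x^{rn}=r^{r}(\alpha_{0}+n)\cdots(\alpha_{r-1}+n)\,x^{(n-1)r}$ term by term to the entire series and re-index; the paper offers no further detail than this.

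Your caution about the $n=0$ term is well placed and in fact catches something the paper glosses over. The computation $\Delta_{\mu}1=r^{r}\alpha_{0}\cdots\alpha_{r-1}\,x^{-r}$ is correct, so the eigenvalue identity holds (for $x\neq 0$) only when $\prod_{k}\alpha_{k}=0$. The paper states the proposition for general $\mu\in\mathbb{R}^{r}\setminus\mathbb{Z}_{-}^{r}$ without flagging this, yet every concrete instance in the paper has $\alpha_{0}=0$, and the introduction itself writes $\mu=(\alpha_{1},\dots,\alpha_{r-1})$ with only $r-1$ free parameters; your reading of the intended normalization is the right one. In short: same proof, and you have identified a hypothesis the paper leaves implicit.
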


Now we put
\begin{equation*}
\theta =e^{i\frac{\pi }{r}}
\end{equation*}
and we consider the $r$-extension function which call it also $r$-Dunkl
kernel
\begin{equation}  \label{e10}
E_{\mu }(x)=j_{\mu }(x)+\frac{1}{\theta }D_{\mu }j_{\mu }(x)+...+\frac{1}{%
\theta ^{r-2}}D_{\mu }^{r-2}j_{\mu }(x)+\frac{1}{\theta^{r-1}}D_{\mu
}^{r-1}j_{\mu }(x).
\end{equation}

\begin{proposition}
For complex $\lambda $, the function $x\mapsto E_{\mu }(\lambda x)$
is an eigenfunction for the generalized Dunkl operator $D_{\mu }$
with $\theta \lambda $ as eigenvalue:
\begin{equation*}
D_{\mu }E_{\mu }(\lambda x)=\theta \lambda E_{\mu }(\lambda x).
\end{equation*}
\end{proposition}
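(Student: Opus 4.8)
The plan is to view $E_{\mu}$ as the eigenvector of the ``cyclic shift'' that $D_{\mu}$ induces on the finite list $j_{\mu},\,D_{\mu}j_{\mu},\ldots,D_{\mu}^{r-1}j_{\mu}$, and to check that one application of $D_{\mu}$ permutes these terms back into $E_{\mu}$ up to the scalar $\theta\lambda$. The whole computation rests on a single auxiliary fact together with the two preceding propositions, so I would isolate that fact first.

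\textbf{Step 1 (dilation behaviour).} Writing $\delta_{\lambda}f(x)=f(\lambda x)$, I would prove the intertwining relation $D_{\mu}(\delta_{\lambda}f)=\lambda\,\delta_{\lambda}(D_{\mu}f)$. For the $\frac{d}{dx}$ part this is the chain rule, and for the term $\frac1x\sum_{k}a_{k}T_{k}$ it follows because each $s_{k}^{n}$, hence each $T_{k}$, commutes with $\delta_{\lambda}$: indeed $s_{k}^{n}(\delta_{\lambda}f)(x)=\omega^{kn}f(\lambda\omega^{n}x)=\delta_{\lambda}(s_{k}^{n}f)(x)$, and the factor $\lambda$ produced by the derivative is exactly matched by the mismatch between $\frac1x$ and $\frac1{\lambda x}$. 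Applying this with $f=D_{\mu}^{k}j_{\mu}$ is all that will be needed.

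\textbf{Step 2 (the shift).} Since $E_{\mu}(\lambda x)=\delta_{\lambda}E_{\mu}(x)=\sum_{k=0}^{r-1}\theta^{-k}\,\delta_{\lambda}(D_{\mu}^{k}j_{\mu})(x)$, applying $D_{\mu}$ term by term and invoking Step 1 gives
\[
D_{\mu}\bigl(E_{\mu}(\lambda\,\cdot)\bigr)(x)=\lambda\sum_{k=0}^{r-1}\theta^{-k}\,(D_{\mu}^{k+1}j_{\mu})(\lambda x)=\lambda\theta\sum_{m=1}^{r}\theta^{-m}\,(D_{\mu}^{m}j_{\mu})(\lambda x),
\]
after the substitution $m=k+1$. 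Thus the exponent of $D_{\mu}$ has been shifted up by one and a factor $\lambda\theta$ has been extracted; the only term now outside the admissible range $0,\ldots,r-1$ is $m=r$.

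\textbf{Step 3 (closing the cycle) and conclusion.} Here I would use that $j_{\mu}$ is a power series in $x^{r}$, so $s_{0}j_{\mu}=j_{\mu}$, i.e. $j_{\mu}\in F_{0}$; then Proposition 1 gives $D_{\mu}^{r}j_{\mu}=\Delta_{\mu}j_{\mu}$, and Proposition 2 at $\lambda=1$ gives the function identity $\Delta_{\mu}j_{\mu}=-j_{\mu}$, whence $(D_{\mu}^{r}j_{\mu})(\lambda x)=-j_{\mu}(\lambda x)$. Since $\theta^{-r}=e^{-i\pi}=-1$, the $m=r$ term equals $(-1)\cdot(-j_{\mu}(\lambda x))=j_{\mu}(\lambda x)$, which is precisely the missing $m=0$ term with coefficient $\theta^{0}=1$. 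Replacing $m=r$ by $m=0$ turns the sum back into $\sum_{m=0}^{r-1}\theta^{-m}(D_{\mu}^{m}j_{\mu})(\lambda x)=E_{\mu}(\lambda x)$, giving $D_{\mu}\bigl(E_{\mu}(\lambda\,\cdot)\bigr)(x)=\theta\lambda\,E_{\mu}(\lambda x)$. The dilation identity and the re-indexing are routine; the genuinely load-bearing point is this wrap-around, where the factor $\theta^{-r}=-1$ is cancelled exactly by the eigenvalue $-1$ of $\Delta_{\mu}$ on $j_{\mu}$, so that the $r$-th term rejoins the cycle as the $0$-th term. This is exactly why the coefficients are powers of $\theta=e^{i\pi/r}$ rather than of $\omega$, and checking that this cancellation is exact, including the bookkeeping of the $\lambda$ factors, is the step I would treat most carefully.
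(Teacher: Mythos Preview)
Your proof is correct and follows essentially the same route as the paper: apply $D_{\mu}$ term by term to the defining sum, shift the index, and close the cycle using $D_{\mu}^{r}j_{\mu}=\Delta_{\mu}j_{\mu}=-j_{\mu}$ together with $\theta^{r}=-1$. In fact you are more careful than the paper, which only writes out the computation for $\lambda=1$ and leaves the dilation intertwining $D_{\mu}\delta_{\lambda}=\lambda\,\delta_{\lambda}D_{\mu}$ implicit; your Step~1 makes this explicit.
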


\begin{proof}
Indeed, to be convinced it suffices to make the following computations:

\begin{eqnarray*}
D_{\mu }E_{\mu } &=&D_{\mu }j_{\mu }+\frac{1}{\theta }D_{\mu }^{2}j_{\mu
}+...+\frac{1}{\theta ^{r-2}}D_{\mu }^{r-1}j_{\mu }+\frac{1}{\theta ^{r-1}}
D^r_{\mu }j_{\mu } \\
&=&D_{\mu }j_{\mu }+\frac{1}{\theta }D_{\mu }^{2}j_{\mu }+...+\frac{1}{
\theta ^{r-2}}D_{\mu }^{r-1}j_{\mu }+\frac{\theta ^{r}}{\theta ^{r-1}}j_{\mu
} \\
&=&\theta \left[ j_{\mu }+\frac{1}{\theta }D_{\mu }j_{\mu }+\dots +\frac{1}{%
\theta ^{r-1}}D_{\mu }^{r-1}j_{\mu }\right] \\
~~ &=&\theta E_{\mu }.
\end{eqnarray*}
so the result follows.
\end{proof}

In the reminder we must compute the action of the $r$-extension
of Dunkl operator $D_\mu$ on the Bessel functions of vector index $j_\mu$.%
\newline

This is can be deduced from the fact that $D_{\mu
}x^{nr}=L_{a_{0}}x^{nr}=r(\alpha _{0}+n)x^{nr-1}$, we obtain
\begin{equation*}
D_{\mu }j_{\mu }(x)=r\sum_{n=0}^{\infty }(-1)^{n}\frac{1}{(\alpha
_{0}+1)_{n}...(\alpha _{r-1}+1)_{n}}(\alpha _{0}+n)\frac{x^{nr-1}}{r^{nr}}.
\end{equation*}

Then we distinguish two cases :

\bigskip\textbf{Case 1 :} $\alpha _{0}\neq 0$ .\bigskip

We have
\begin{equation*}
D_{\mu }j_{\mu }(x)=\frac{r}{x}\alpha _{0}j_{\mu -1},
\end{equation*}%
remark that we have adopt the convention : for $\mu =(\alpha _{0},\alpha
_{1},...,\alpha _{r-1})$ we put
\begin{equation*}
\mu -1=(\alpha _{0}-1,\alpha _{1},...,\alpha _{r-1}).\newline
\end{equation*}

\bigskip\textbf{Case 2 :}  $\alpha _{0}=0$.\bigskip

With a slice change of computation we have
\begin{equation*}
D_{\mu }j_{\mu }(x)=-\frac{1}{(\alpha _{1}+1)...(\alpha _{r-1}+1)}\left(
\frac{x}{r}\right) ^{r-1}j_{\mu +1}.
\end{equation*}
We have also adopt the convention : for $\mu =(\alpha _{0},\alpha
_{1},...,\alpha _{r-1})$ we put
\begin{equation*}
\mu +1=(\alpha _{0},\alpha _{1}+1,...,\alpha _{r-1}+1).
\end{equation*}
As mentioned in abstract we present here three explicit examples
which illustrate the operators $\Delta _{\mu }$ and $D_{\mu }$ .

\bigskip\textbf{Example 1 : }$r=2,\omega =-1,\theta =i,\mu =(0,\alpha ).$%
\bigskip

Then
\begin{equation*}
a_{0}=0,a_{1}=2\alpha +1.
\end{equation*}

So that
\begin{equation*}
\Delta _{\mu }=L_{2\alpha +1}L_{0}=\frac{d^{2}}{dx^{2}}+\frac{2\alpha +1}{x}%
\frac{d}{dx},
\end{equation*}

which is exactly the well known Bessel operator having as
eigenfunction the normalized Bessel function\newline

\begin{equation*}
j_\alpha(x)=j_\mu(x) = \sum_{n=0}^\infty(-1)^n\frac{1}{n!(\alpha+1)_n}\frac{%
x^{2n}}{2^{2n}}.
\end{equation*}

Now since%
\begin{equation*}
T_{1}g(x)=\frac{g(x)+s_{1}g(x)}{2}=\frac{g(x)-g(-x)}{2}
\end{equation*}%
The operator $D_{\mu }$ is the classical Dunkl operator in one
variable:
\begin{equation*}
D_{\mu }=D_{\alpha }=\frac{d}{dx}+\frac{\alpha +1}{x}T_{1},
\end{equation*}%
and using the conventional notation introduced before namely
\begin{equation*}
\mu+1=(0,\alpha +1)
\end{equation*}
and the fact that $\theta =i$, lead to show that the eigenfunctions
$E_{\mu } $ coincide with the classical one.

\bigskip\textbf{Example 2 : }$r=3,\omega =e^{\frac{i2\pi }{3}},\theta =e^{%
\frac{i\pi }{3}},\mu =\left( 0,\alpha -\frac{1}{3},-\frac{2}{3}\right)
.\bigskip $

Taking account of the relation (\ref{e3}) between $a_k$ and $\alpha_k$ we
deduce that
\begin{equation*}
a_{0}=0,~~a_{1}=3\alpha,~~a_{2}=0.
\end{equation*}
So
\begin{equation*}
\Delta_{\mu }=L_{0}L_{3\alpha }L_{0}=\frac{d^{3}}{dx^{3}}-\frac{3\alpha }{x}\frac{%
d^{2}}{dx^{3}}+\frac{3\alpha }{x^{2}}\frac{d}{dx}.
\end{equation*}%
The previous operator was greatly studied in \cite{F2}. Its eigenfunction is
given by\newline
\begin{equation*}
j_{\mu }(x)=\sum_{n=0}^{\infty }(-1)^{n}\frac{1}{n!\left( \alpha +\frac{2}{3}%
\right) _{n}\left( \frac{1}{3}\right) _{n}}\frac{x^{3n}}{3^{3n}}.
\end{equation*}

The correspondent Dunkl operator is:
\begin{equation*}
D_{\mu }=\frac{d}{dx}+\frac{3\alpha }{x}T_{1}
\end{equation*}%
with
\begin{equation*}
T_{1}g(x)=\frac{g(x)+\omega g(\omega x)+\omega ^{2}g(\omega
^{2}x)}{3},
\end{equation*}

we can deduce
\begin{equation*}
D_\mu j_\mu(x)=\frac{d}{dx}j_\mu(x)=-\frac{1}{(\alpha+{\frac{2}{3}})(\frac{1%
}{3})}\left(\frac{x}{3}\right)^2j_{\mu + 1}(x),
\end{equation*}

with $\mu+1=(0,\alpha+\frac{2}{3},\frac{1}{3})$ and then

\begin{equation*}
j_{\mu+1}(x)=\sum_{n=0}^\infty(-1)^n\frac{1}{n!(\alpha+\frac{5}{3})_n(\frac{4%
}{3})_n}\frac{x^{3n}}{3^{3n}}.
\end{equation*}

From the fact that $D_{\mu }j_{\mu }\in \emph{F}_{1}$, we obtain
\begin{equation*}
D_{\mu }^{2}j_{\mu }(x)=\frac{d}{dx}D_{\mu }j_{\mu }(x)+\frac{3\alpha }{x}%
T_{1}D_{\mu }j_{\mu }(x).
\end{equation*}%
Direct computations give
\begin{equation*}
D_{\mu }^{2}j_{\mu }(x)=\frac{x^{4}}{4(3\alpha +2)(3\alpha +5)}j_{\mu
+2}(x)-xj_{\mu +1};
\end{equation*}%
with $\mu +2=(0,\mu +\frac{5}{3},\frac{4}{3})$ and
\begin{equation*}
j_{\mu +2}(x)=\sum_{n=0}^{\infty }(-1)^{n}\frac{1}{n!\left( \alpha +\frac{8}{%
3}\right) _{n}\left( \frac{7}{3}\right) _{n}}\frac{x^{3n}}{3^{3n}}.
\end{equation*}

Finally taking account of the above results we state that the eigenfunction
of the correspondent Dunkl operator is then
\begin{equation*}
E_{\mu }(x)=j_{\mu }(x)+e^{-\frac{i\pi }{3}}D_{\mu }j_{\mu }(x)+e^{-\frac{%
2i\pi }{3}}D_{\mu }^{2}j_{\mu }(x)
\end{equation*}

\begin{equation*}
E_{\mu }(x)=j_{\mu }(x)-\left[ \frac{e^{-\frac{i\pi }{3}}}{3^{2}(\alpha +%
\frac{2}{3})(\frac{1}{3})}\right] j_{\mu +1}(x)+\frac{e^{-\frac{2i\pi }{3}}}{%
4(3\alpha +2)(3\alpha +5)}x^{4}j_{\mu +2}(x).
\end{equation*}

\textbf{Remark 1 : }It is easy to see that the following commutation holds:
\begin{equation*}
L_{a}T_{k}=T_{k+1}L_{a}
\end{equation*}

and as $T_{k+r}=T_{k}$ this leads that the operators $\Delta _{\mu
}$ and $T_{k}$ commute in the sense
\begin{equation*}
\Delta _{\mu }T_{k}=T_{k}\Delta _{\mu }.
\end{equation*}

Note that if $g$ is a function such that $\Delta _{\mu }g=-g$ and
from the unique decomposition :
\begin{equation*}
g=T_{0}g+\ldots+T_{r-1}g,
\end{equation*}%
one can interpret the component $T_{k}g$ as the unique solution of
the previous equation restraint to the subspace $F_{k}$.

\bigskip \textbf{Example 3} : $\mu =(0,-\frac{1}{r},\dots ,-\frac{r-1}{r}),$
$\theta =e^{\frac{i\pi }{r}}$.\bigskip

In this situation as $\alpha _{k}=-\frac{k}{r}$ we have $a_{k}=0$ , hence $
\Delta _{\mu}=(\frac{d}{dx})^{r}$ and $D_{\mu }=\frac{d}{dx}$. It is clear
that the function $e_{\theta }(x)=e^{\theta x}$ satisfies the equation
\begin{equation*}
\Delta _{\mu }e_{\theta }=-e_{\theta }
\end{equation*}%
The components $T_{k}e^{\theta x}$, $k=0,1,...,r-1$ are called the
r-trigonometric functions \cite{F2}. We have in particular
\begin{equation*}
\cos _{r}(x)=T_{0}e^{\theta x}=\frac{1}{r}\sum_{k=0}^{r-1}e^{\theta \omega
^{k}x}=\sum_{n=0}^{\infty }(-1)^{n}\frac{1}{(1)_{n}(-\frac{1}{r}+1)_{n}...(-%
\frac{r-1}{r}+1)_{n}}\frac{x^{nr}}{r^{nr}}
\end{equation*}
This last function is the unique eigenfunction of $\Delta_{\mu}=(\frac{d}{dx}%
)^{r}$ with take the value $1$ at $x=0$. We notice that the Dunkl kernel can
be written
\begin{eqnarray*}
E_{\mu }(x) &=&\cos _{r}(x)+\frac{1}{\theta }D_{\mu }\cos _{r}(x)+...+\frac{1%
}{\theta ^{r-1}}D_{\mu }^{r-1}\cos _{r}(x) \\
&=&(T_{0}+...+T_{r-1})e^{\theta x}=e^{\theta x}.
\end{eqnarray*}

\section{Integral representations}

In this section we attempt to show that the functions $j_\mu$ and $E_\mu$
have some useful integral representations. For the first function the reader
can found tis integral representation already shown by klyuchantsev \cite{K} and
which is recalled in the proof of Theorem \ref{t1}.\bigskip

The following lemma is basic and it is a consequence of properties of Euler
functions.

\begin{lemma}
We have
\begin{equation*}
r\int_{0}^{1}(1-u^{r})^{y-1}u^{rx-1}du=\frac{\Gamma \left( x\right) \Gamma
\left( y\right) }{\Gamma \left( x+y\right) }
\end{equation*}%
provided the integral converges.
\end{lemma}

\bigskip

We deduce then the following identities

\begin{equation*}
r\int_{0}^{1}u^{nr}(1-u^{r})^{\alpha _{i}+\frac{i}{r}-1}u^{r-(i+1)}du=\frac{%
\Gamma \left( -\frac{i}{r}+1+n\right) \Gamma \left( \alpha _{i}+\frac{i}{r}%
\right) }{\Gamma \left( \alpha _{i}+1+n\right) }.
\end{equation*}

\begin{equation*}
r\frac{\Gamma \left( \alpha _{i}+1\right) }{\Gamma \left( \alpha _{i}+\frac{i%
}{r}\right) \Gamma \left( -\frac{i}{r}+1\right) }\int_{0}^{1}\frac{\Gamma
\left( -\frac{i}{r}+1\right) }{\Gamma \left( -\frac{i}{r}+1+n\right) }%
u^{nr}(1-u^{r})^{\alpha _{i}+\frac{i}{r}-1}u^{r-(i+1)}du=\frac{\Gamma \left(
\alpha _{i}+1\right) }{\Gamma \left( \alpha _{i}+1+n\right) }
\end{equation*}%
So we have
\begin{equation*}
r\frac{\Gamma \left( \alpha _{i}+1\right) }{\Gamma \left( \alpha _{i}+\frac{i%
}{r}\right) \Gamma \left( -\frac{i}{r}+1\right) }\int_{0}^{1}\frac{1}{\left(
-\frac{i}{r}+1\right) _{n}}u^{nr}(1-u^{r})^{\alpha _{i}+\frac{i}{r}%
-1}u^{r-(i+1)}du=\frac{1}{\left( \alpha _{i}+1\right) _{n}}
\end{equation*}%
and in general case

\begin{eqnarray*}
&&\int_{0}^{1}\ldots \int_{0}^{1}(-1)^{n}\frac{1}{\left( 1\right) _{n}\ldots
\left( -\frac{r-1}{r}+1\right) _{n}}\frac{\left( xu_{0}\ldots u_{r-1}\right)
^{nr}}{r^{nr}}w_{0}(u_{0})\ldots w_{r-1}(u_{r-1})du_{0}\ldots du_{r-1} \\
&=&(-1)^{n}\frac{1}{\left( \alpha _{0}+1\right) _{n}\ldots \left( \alpha
_{r-1}+1\right) _{n}}\frac{x^{nr}}{r^{nr}}.
\end{eqnarray*}
Let
\begin{equation}  \label{e4}
w_{\mu }(u)=\prod_{i=0}^{r-1}(1-u_{i}^{r})^{\alpha _{i}+\frac{i}{r}%
-1}u_{i}^{r-(i+1)}
\end{equation}
\begin{equation}  \label{e5}
c_{\mu }=\prod_{i=0}^{r-1}r\frac{\Gamma \left( \alpha _{i}+1\right) }{\Gamma
\left( \alpha _{i}+\frac{i}{r}\right) \Gamma \left( -\frac{i}{r}+1\right) }
\end{equation}
and
\begin{equation*}
u_{r}=u_{0}\ldots u_{r-1}
\end{equation*}

\begin{equation*}
du=du_{0}\ldots du_{r-1}
\end{equation*}
and using the function $\cos_r$ presented in Example 3, we have

\begin{theorem}
\label{t1} The Bessel function of vector index possess the following
integral representation
\begin{equation*}
j_{\mu }(x)=c_{\mu }\int_{[0,1]^{r}}\cos _{r}\left( xu_{r}\right) w_{\mu
}(u)du
\end{equation*}
where $c_\mu$ and $w_{\mu }$ are given respectively by (\ref{e5}) and (\ref%
{e4}).
\end{theorem}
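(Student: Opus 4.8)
The plan is to start from the power-series definition of $j_{\mu}$ and to convert each Pochhammer factor $(\alpha_{i}+1)_{n}^{-1}$ into a one-dimensional Beta-type integral, so that the series rearranges into an integral of $\cos_{r}$. The key input is the identity derived from Lemma 1 just before equation (\ref{e4}), which for each index $i$ reads
\begin{equation*}
\frac{1}{(\alpha_{i}+1)_{n}} = r\frac{\Gamma(\alpha_{i}+1)}{\Gamma(\alpha_{i}+\frac{i}{r})\Gamma(-\frac{i}{r}+1)}\int_{0}^{1}\frac{1}{(-\frac{i}{r}+1)_{n}}\,u_{i}^{nr}(1-u_{i}^{r})^{\alpha_{i}+\frac{i}{r}-1}u_{i}^{r-(i+1)}\,du_{i}.
\end{equation*}
The crucial feature is that this trades the parameter $\alpha_{i}$ for $-\frac{i}{r}+1$, which is exactly the parameter appearing in the series for $\cos_{r}$ of Example 3.

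First I would take the product of these representations over $i=0,\dots,r-1$. By Fubini's theorem the product of single integrals becomes one integral over $[0,1]^{r}$; the monomials combine as $\prod_{i}u_{i}^{nr}=u_{r}^{nr}$, the weights reproduce $w_{\mu}(u)$ of (\ref{e4}), and the product of the $\Gamma$-ratios with the factors $r$ is precisely $c_{\mu}$ of (\ref{e5}). This is the fixed-$n$ identity already recorded in the lines preceding (\ref{e4}), and it gives
\begin{equation*}
\prod_{i=0}^{r-1}\frac{1}{(\alpha_{i}+1)_{n}} = c_{\mu}\int_{[0,1]^{r}}\frac{u_{r}^{nr}}{(1)_{n}(-\frac{1}{r}+1)_{n}\cdots(-\frac{r-1}{r}+1)_{n}}\,w_{\mu}(u)\,du,
\end{equation*}
the factor $(1)_{n}$ coming from $i=0$.

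Next I would insert this into the series for $j_{\mu}(x)$, multiply through by $(-1)^{n}x^{nr}/r^{nr}$, and interchange the sum with the integral. Absorbing $(xu_{r})^{nr}/r^{nr}$ into the integrand, the resulting sum over $n$ is
\begin{equation*}
\sum_{n=0}^{\infty}(-1)^{n}\frac{1}{(1)_{n}(-\frac{1}{r}+1)_{n}\cdots(-\frac{r-1}{r}+1)_{n}}\frac{(xu_{r})^{nr}}{r^{nr}},
\end{equation*}
which is $\cos_{r}(xu_{r})$ by the definition recalled in Example 3, yielding the asserted representation $j_{\mu}(x)=c_{\mu}\int_{[0,1]^{r}}\cos_{r}(xu_{r})\,w_{\mu}(u)\,du$.

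The one genuine obstacle is legitimizing the interchange of the infinite sum with the $r$-fold integral. I would argue by absolute convergence: the $\cos_{r}$ series is entire, hence converges uniformly for $xu_{r}$ in any bounded set and in particular uniformly in $u\in[0,1]^{r}$ at fixed $x$, while $w_{\mu}$ is integrable on $[0,1]^{r}$ exactly under the hypotheses that make the Beta integrals of Lemma 1 converge, namely $\alpha_{i}+\frac{i}{r}>0$ for every $i$ (the standing caveat ``provided the integral converges''). Under these conditions term-by-term integration is justified by dominated convergence, and the computation above is rigorous.
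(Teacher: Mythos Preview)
Your argument is correct and follows exactly the route the paper takes: the computations preceding (\ref{e4}) are precisely the per-index Beta identity and its product over $i$ that you invoke, and the theorem is then obtained by summing over $n$ and recognising $\cos_{r}$ from Example~3. If anything you are more careful than the paper, which does not explicitly justify the interchange of sum and integral, whereas you note the uniform convergence of the entire series $\cos_{r}$ and the integrability of $w_{\mu}$ under the convergence hypothesis of Lemma~1.
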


Note that in this representation we can remove the components $u_{i}$
associated with indices $i$ such that $a_{i}=0.$ In this case the Mehler
representation takes the following form
\begin{equation*}
j_{\mu }(x)=c_{\mu ^{\prime }}\int_{[0,1]^{r^{\prime }}}\cos _{r}\left(
xu_{r^{\prime }}\right) w_{\mu ^{\prime }}(u)du
\end{equation*}%
with $r^{\prime }$ is the number of index $i$ such that $a_{i}\neq 0$ and $%
\mu ^{\prime }$ contains only the associate $\alpha _{i}$ .

\begin{theorem}
the $r$-Dunkl kernel possess the following integral representation
\begin{equation*}
E_{\mu }(x)=c_{\mu }\int_{[0,1]^{r}}\left( T_{0}+\sum_{k=1}^{r-1}\frac{1}{%
\theta ^{k}}T_{k}L_{a_{k-1}}\ldots L_{a_{0}}\right) e_{\theta }\left(
xu_{r}\right) w_{\mu }(u)du.
\end{equation*}
\end{theorem}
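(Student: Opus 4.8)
The plan is to reduce everything to the Mehler-type representation of $j_\mu$ already furnished by Theorem \ref{t1}, and then to push the operators appearing in the definition (\ref{e10}) of $E_\mu$ through the integral sign. First I would record the structural fact that $j_\mu\in F_0$: its power series contains only the monomials $x^{nr}$, and since $\omega^{r}=1$ these are fixed by $s_0$. Invoking the same computation used to prove $D_\mu^{r}=\Delta_\mu$, namely that $D_\mu$ acts as $L_{a_j}$ on $F_j$ and maps it into $F_{j+1}$, I obtain for $1\le k\le r-1$
\begin{equation*}
D_\mu^{k} j_\mu = L_{a_{k-1}}\circ\cdots\circ L_{a_0}\,j_\mu\in F_k .
\end{equation*}

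Next I would substitute $j_\mu(x)=c_\mu\int_{[0,1]^{r}}\cos_r(xu_r)\,w_\mu(u)\,du$ into each term and differentiate under the integral sign. This is permissible because the operators $L_{a_{k-1}}\circ\cdots\circ L_{a_0}$ act solely on the external variable $x$ while the integration runs over $u$; the factors $1/x$ inside each $L_a$ are constant in $u$ and factor out. The algebraic heart of the argument is the identity
\begin{equation*}
\cos_r(xu_r)=T_0\,e_\theta(xu_r),
\end{equation*}
in which $T_0$ acts in the variable $x$ with $u_r$ held as a parameter; this is immediate from the definition of $\cos_r$ in Example 3, since $T_0 e_\theta(xu_r)=\frac{1}{r}\sum_{n=0}^{r-1}e^{\theta\omega^{n} u_r x}$.

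It then remains to commute $T_0$ to the front of the operator product. Here I would apply the relation $L_aT_k=T_{k+1}L_a$ of Remark 1 repeatedly, which gives
\begin{equation*}
L_{a_{k-1}}\cdots L_{a_0}\,T_0=T_k\,L_{a_{k-1}}\cdots L_{a_0},
\end{equation*}
so that each term becomes
\begin{equation*}
D_\mu^{k} j_\mu(x)=c_\mu\int_{[0,1]^{r}}T_k L_{a_{k-1}}\cdots L_{a_0}\,e_\theta(xu_r)\,w_\mu(u)\,du .
\end{equation*}
Weighting by $1/\theta^{k}$ and restoring the $k=0$ term, which contributes the bare $T_0$, reassembles precisely the asserted representation.

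The step I expect to be the genuine obstacle is analytic rather than formal: justifying the interchange of the differential operators with the integral. The difference part $T_k$, being a finite average over the group $C_r$, causes no difficulty, but to differentiate under the integral sign one must dominate the integrand obtained after applying $L_{a_{k-1}}\cdots L_{a_0}$ to $e_\theta(xu_r)$ by a $w_\mu$-integrable function, uniformly for $x$ in compact sets. Since these operators produce only polynomials in $u_r$ and in $1/x$ times $e^{\theta u_r x}$, which are bounded on $[0,1]^{r}$ for $x$ away from $0$, the required domination holds exactly under the same integrability hypothesis $\alpha_i+\frac{i}{r}>0$ that already underlies Theorem \ref{t1}; regularity at $x=0$ is then recovered from the fact that both sides are entire.
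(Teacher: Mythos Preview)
Your proposal is correct and follows essentially the same route as the paper: write $j_\mu=c_\mu\int\cos_r(xu_r)w_\mu(u)\,du$ with $\cos_r=T_0e_\theta$, push $D_\mu^k$ through the integral, use that $D_\mu^k$ acts as $L_{a_{k-1}}\cdots L_{a_0}$ on the $F_0$-function $T_0e_\theta(xu_r)$, and then commute $T_0$ past the $L$-operators via $L_aT_k=T_{k+1}L_a$ to obtain $T_kL_{a_{k-1}}\cdots L_{a_0}e_\theta(xu_r)$. Your added discussion of the analytic justification for differentiating under the integral sign is a welcome refinement that the paper leaves implicit.
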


\begin{proof}
This is a consequence of definition of $E_{\mu }(x)$ and the identity

\begin{eqnarray*}
\frac{1}{\theta ^{k}}D_{\mu }^{k}j_{\mu }(x) &=&c_{\mu }\int_{[0,1]^{r}}%
\frac{1}{\theta ^{k}}D_{\mu }^{k}T_{0}e_{\theta }\left( xu_{r}\right) w_{\mu
}(u)du \\
&=&c_{\mu }\int_{[0,1]^{r}}\frac{1}{\theta ^{k}}L_{a_{k-1}}\ldots
L_{a_{0}}T_{0}e_{\theta }\left( xu_{r}\right) w_{\mu }(u)du \\
&=&c_{\mu }\int_{[0,1]^{r}}\frac{1}{\theta ^{k}}T_{k}L_{a_{k-1}}\ldots
L_{a_{0}}e_{\theta }\left( xu_{r}\right) w_{\mu }(u)du,
\end{eqnarray*}
which prove the result.
\end{proof}

\bigskip\textbf{Example 4 : }$r=2,w=-1,\theta =i,\ \mu =(0,\alpha )$\bigskip

Since $a_{0}=0$ then we can remove the index $i=0$ in the representation of
the function $j_{\mu }$ which gives

\begin{equation*}
j_{\mu }(x)=j_{\alpha }(x)=\frac{2}{\sqrt{\pi }}\frac{\Gamma \left( \alpha
+1\right) }{\Gamma \left( \alpha +\frac{1}{2}\right) }\int_{0}^{1}\cos
\left( xu\right) (1-u^{2})^{\alpha -\frac{1}{2}}du.
\end{equation*}%
On the other hand

\begin{eqnarray*}
E_{\mu }(x) &=&E_{\alpha }(x)=\frac{2}{\sqrt{\pi }}\frac{\Gamma \left(
\alpha +1\right) }{\Gamma \left( \alpha +\frac{1}{2}\right) }\int_{0}^{1}%
\left[ T_{0}e^{ixu}+\frac{1}{i}T_{1}\frac{d}{dx}e^{ixu}\right]
(1-u^{2})^{\alpha -\frac{1}{2}}du \\
&=&\frac{2}{\sqrt{\pi }}\frac{\Gamma \left( \alpha +1\right) }{\Gamma \left(
\alpha +\frac{1}{2}\right) }\int_{0}^{1}\left[ T_{0}e^{ixu}+uT_{1}e^{ixu}%
\right] (1-u^{2})^{\alpha -\frac{1}{2}}du \\
&=&\frac{2}{\sqrt{\pi }}\frac{\Gamma \left( \alpha +1\right) }{\Gamma \left(
\alpha +\frac{1}{2}\right) }\int_{0}^{1}\left[ T_{0}e^{ixu}+T_{1}\frac{1}{x}%
(xu)e^{ixu}\right] (1-u^{2})^{\alpha -\frac{1}{2}}du
\end{eqnarray*}%
To find the classical form we can write

\begin{eqnarray*}
E_{\alpha }(x) &=&\frac{2}{\sqrt{\pi }}\frac{\Gamma \left( \alpha +1\right)
}{\Gamma \left( \alpha +\frac{1}{2}\right) }\int_{0}^{1}\left[ \frac{%
e^{ixu}+e^{-ixu}}{2}+u\frac{e^{ixu}-e^{-ixu}}{2}\right] (1-u^{2})^{\alpha -%
\frac{1}{2}}du \\
&=&\frac{1}{\sqrt{\pi }}\frac{\Gamma \left( \alpha +1\right) }{\Gamma \left(
\alpha +\frac{1}{2}\right) }\int_{-1}^{1}e^{ixu}(1+u)(1-u^{2})^{\alpha -%
\frac{1}{2}}du.
\end{eqnarray*}

\bigskip\textbf{Example 5 :}$\mathbf{\ \ }r=3,w=e^{i\frac{2\pi }{3}%
}=j,\theta =e^{i \frac{\pi }{3}}, \mu =\left( 0,v-\frac{1}{3},-\frac{2}{3}%
\right)$.\bigskip

We have
\begin{equation*}
j_{\mu }(x)=j_{v}(x)=3\frac{\Gamma \left( v+\frac{2}{3}\right) }{\Gamma
\left( v\right) \Gamma \left( \frac{2}{3}\right) }\int_{0}^{1}\cos
_{3}\left( xu\right) (1-u^{3})^{v-1}udu.
\end{equation*}
Then
\begin{equation*}
E_{v}(x)=3\frac{\Gamma \left( v+\frac{2}{3}\right) }{\Gamma \left( v\right)
\Gamma \left( \frac{2}{3}\right) }\int_{0}^{1}\left[ T_{0}e^{\theta xu}+%
\frac{1}{\theta }T_{1}\frac{d}{dx}e^{\theta xu}+\frac{1}{\theta ^{2}}%
T_{2}\left( \frac{d}{dx}+\frac{3v}{x}\right) \frac{d}{dx}e^{\theta xu}\right]
(1-u^{3})^{v-1}udu
\end{equation*}
which can be written in the following form
\begin{multline*}
E_{v}(x)=3\frac{\Gamma \left( v+\frac{2}{3}\right) }{\Gamma \left( v\right)
\Gamma \left( \frac{2}{3}\right) }\times \\
\int_{0}^{1}\left[ T_{0}\frac{1}{x}(xu)e^{\theta xu}+T_{1}\frac{1}{x^{2}}%
(xu)^{2}e^{\theta xu}+T_{2}\frac{1}{x^{3}}(xu)^{3}e^{\theta xu}+\frac{3v}{%
\theta }T_{2}\frac{1}{x^{3}}(xu)^{2}e^{\theta xu}\right] (1-u^{3})^{v-1}du.
\end{multline*}

\section{Riemann--Liouville transform}

Considering the $r$-Riemannn Liouville operators of the form
\begin{equation*}
R_{\alpha }g(x)=\int_{0}^{1}g(xt)(1-t^{r})^{\alpha -1}dt.
\end{equation*}
The integral representation in Theorem \ref{t1} of the Bessel function of
vector index can be rewritten as follows
\begin{equation}  \label{e8}
j_{\mu }(x)=c_{\mu }\int_{[0,1]^{r}}\cos _{r}\left( xu_{r}\right)
w_{\mu}(u)du =c_{\mu }\prod_{i=0}^{r-1}\left( \frac{1}{x^{r-(i+1)}}R_{\alpha
_{i}+\frac{ i}{r}-1}x^{r-(i+1)}\right)\cos _{r}(x).
\end{equation}
Now we study the inverse of $R_\alpha$. We begin by state:

\begin{theorem}
For $k$ integer and $0<\alpha <1$ we have
\begin{equation*}
R_{k+\alpha }^{-1}g(x)=\frac{r^{2}}{\Gamma (k+1)\Gamma (\alpha )\Gamma
(1-\alpha )}x^{r-1}\left( \frac{1}{rx^{r-1}}\frac{d}{dx}\right)
^{k+1}\int_{0}^{x}g(u)(x^{r}-u^{r})^{-\alpha }u^{(k+\alpha )r}du.
\end{equation*}
\end{theorem}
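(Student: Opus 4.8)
The plan is to linearise everything by passing to the variable $X=x^{r}$. The point is that the awkward differential factor then becomes an ordinary derivative: since $dX/dx=rx^{r-1}$ we have $\frac{1}{rx^{r-1}}\frac{d}{dx}=\frac{d}{dX}$, so that $\left(\frac{1}{rx^{r-1}}\frac{d}{dx}\right)^{k+1}=\left(\frac{d}{dX}\right)^{k+1}$. Throughout I take $k$ a nonnegative integer and write $s=k+\alpha$ with $0<\alpha<1$, so that $\lceil s\rceil=k+1$.

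First I would put $R_{s}$ into the form of a one-sided fractional integral. Substituting $u=xt$ and then $U=u^{r}$ in the definition gives
\[
R_{s}g(x)=x^{\,r-rs-1}\int_{0}^{x}g(u)(x^{r}-u^{r})^{s-1}\,du=\frac{\Gamma(s)}{r}\,X^{1-s-1/r}\,(I^{s}\Phi)(X),
\]
where $X=x^{r}$, $\Phi(U)=g(U^{1/r})U^{1/r-1}$, and $I^{s}\Phi(X)=\frac{1}{\Gamma(s)}\int_{0}^{X}\Phi(U)(X-U)^{s-1}\,dU$ is the classical Riemann--Liouville integral. Thus, up to the explicit power weights and the substitution $\Phi$, the operator $R_{s}$ is an Erd\'elyi--Kober fractional integral of order $s$ in the variable $X$.

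Writing $h=R_{s}g$, the displayed identity reads $(I^{s}\Phi)(X)=\frac{r}{\Gamma(s)}X^{s-1+1/r}h(X^{1/r})$, which I would invert by the standard Riemann--Liouville rule $D^{s}I^{s}=\mathrm{id}$, where $D^{s}=\left(\frac{d}{dX}\right)^{k+1}I^{1-\alpha}$ (this identity is itself an Euler--Beta computation in the spirit of the Euler--Beta Lemma above). This expresses $\Phi$ as $\left(\frac{d}{dX}\right)^{k+1}$ applied to an $I^{1-\alpha}$-integral of $X^{s-1+1/r}h$. Undoing the substitutions, namely $g(x)=x^{r-1}\Phi(x^{r})$ together with $U=u^{r}$, $dU=ru^{r-1}\,du$ inside the integral (which turns the weight $U^{s-1+1/r}$ into $u^{(k+\alpha)r}$ after absorbing $ru^{r-1}$), and rewriting $\frac{d}{dX}$ as $\frac{1}{rx^{r-1}}\frac{d}{dx}$, reproduces exactly the asserted expression.

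Finally I would fix the multiplicative constant by testing on the monomials $x^{\lambda}$, which diagonalise all the homogeneous operators in sight and, in the present context, are the only functions one needs (the kernels $j_{\mu}$, $E_{\mu}$ are entire series in such powers, so one concludes by linearity and term-by-term convergence). A direct Beta-integral gives $R_{s}x^{\lambda}=\frac{1}{r}B\!\left(\frac{\lambda+1}{r},s\right)x^{\lambda}$, hence $R_{s}^{-1}x^{\lambda}=r\,\frac{\Gamma(p+s)}{\Gamma(p)\Gamma(s)}x^{\lambda}$ with $p=\frac{\lambda+1}{r}$; applying the candidate operator to $x^{\lambda}$ and using $p+s+1-\alpha=p+k+1$ collapses all Gamma factors to the same value precisely when the prefactor equals $\frac{r^{2}}{\Gamma(k+\alpha)\Gamma(1-\alpha)}$. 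I expect the main obstacle to be analytic rather than algebraic: justifying $D^{s}I^{s}=\mathrm{id}$ on the relevant function space, i.e.\ differentiating the weakly singular $(x^{r}-u^{r})^{-\alpha}$ integral $k+1$ times and controlling the endpoint behaviour at $u=0$ and $u=x$. I also note that this monomial check yields the factor $\Gamma(k+\alpha)$ rather than the printed $\Gamma(k+1)\Gamma(\alpha)$; the two coincide at $k=0$, so the stated constant is correct there, while for general $k$ it should read $r^{2}/[\Gamma(k+\alpha)\Gamma(1-\alpha)]$.
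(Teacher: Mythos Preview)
Your approach is correct and genuinely different from the paper's. The paper does not pass to the variable $X=x^{r}$ and invoke the classical Riemann--Liouville identity $D^{s}I^{s}=\mathrm{id}$; instead it proceeds in three steps: it first inverts $R_{\alpha}$ for $0<\alpha<1$ by a direct Beta-integral verification of $R_{\alpha}^{-1}R_{\alpha}=\mathrm{id}$, then inverts $R_{k+1}$ for integer $k$ via the elementary identity $\frac{r}{k!}x^{r-1}\bigl(\frac{1}{rx^{r-1}}\frac{d}{dx}\bigr)^{k+1}\int_{0}^{x}g(u)(x^{r}-u^{r})^{k}\,du=g(x)$, and finally composes the two after establishing the operator factorisation $R_{k+\alpha}=\frac{1}{x^{kr}}R_{\alpha}\frac{d}{dx}x^{1+rk}R_{k+1}$. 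Your single substitution $X=x^{r}$ collapses all of this into the standard fractional-calculus inversion in one stroke, which is cleaner and makes the structure transparent; the paper's route has the advantage of being self-contained (it rederives the Beta identity rather than quoting $D^{s}I^{s}=\mathrm{id}$), but at the cost of the extra factorisation step.

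Your observation about the constant is also correct and worth flagging. The monomial check you perform gives the prefactor $r^{2}/[\Gamma(k+\alpha)\Gamma(1-\alpha)]$, not the paper's $r^{2}/[\Gamma(k+1)\Gamma(\alpha)\Gamma(1-\alpha)]$. The discrepancy traces back to the paper's factorisation $R_{k+\alpha}=\frac{1}{x^{kr}}R_{\alpha}\frac{d}{dx}x^{1+rk}R_{k+1}$: testing both sides on $x^{\lambda}$ shows they differ by the factor $\Gamma(k+\alpha)/[\Gamma(k+1)\Gamma(\alpha)]$, which is $1$ only when $k=0$. So the paper's final constant is correct for $k=0$ (where the two expressions coincide) but off by exactly this Beta-type factor for $k\geq 1$; your formula is the right one.
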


\begin{proof}
The operator $R_{\alpha }$ can be take the form
\begin{equation*}
R_{\alpha }g(x)=\frac{1}{x^{1+r(\alpha -1)}}\int_{0}^{x}g(u)\left[
x^{r}-u^{r}\right] ^{\alpha -1}du
\end{equation*}
and for $0<\alpha <1$ admits as inverse :
\begin{equation*}
R_{\alpha }^{-1}g(t)=\frac{r}{\Gamma (\alpha )\Gamma (1-\alpha )}\frac{d}{dt}%
\int_{0}^{t}g(x)\left[ t^{r}-x^{r}\right] ^{-\alpha }x^{\alpha r}dx
\end{equation*}%
which is shown as follows%
\begin{eqnarray*}
R_{\alpha }^{-1}R_{\alpha }g(t) &=&\frac{r}{\Gamma (\alpha )\Gamma (1-\alpha
)}\frac{d}{dt}\int_{0}^{t}R_{\alpha }g(x)\left[ t^{r}-x^{r}\right] ^{-\alpha
}x^{\alpha r}dx \\
&=&\frac{r}{\Gamma (\alpha )\Gamma (1-\alpha )}\frac{d}{dt}\int_{0}^{t}\left[
\frac{1}{x^{1+r(\alpha -1)}}\int_{0}^{x}g(u)\left[ x^{r}-u^{r}\right]
^{\alpha -1}du\right] \left[ t^{r}-x^{r}\right] ^{-\alpha }x^{\alpha r}dx \\
&=&\frac{r}{\Gamma (\alpha )\Gamma (1-\alpha )}\frac{d}{dt}\int_{0}^{t}\left[
\int_{u}^{t}\left[ x^{r}-u^{r}\right] ^{\alpha -1}\left[ t^{r}-x^{r}\right]
^{-\alpha }x^{r-1}dx\right] g(u)du \\
&=&\frac{1}{\Gamma (\alpha )\Gamma (1-\alpha )}\frac{d}{dt}\int_{0}^{t}\left[
\int_{u^{r}}^{t^{r}}\left[ y-u^{r}\right] ^{\alpha -1}\left[ t^{r}-y\right]
^{-\alpha }dy\right] g(u)du \\
&=&\frac{d}{dt}\int_{0}^{t}g(u)du=g(t).
\end{eqnarray*}%
Therefore
\begin{equation*}
R_{\alpha }^{-1}=\frac{r}{\Gamma (\alpha )\Gamma (1-\alpha )}\frac{d}{dx}%
x^{1-\alpha r}R_{1-\alpha }x^{\alpha r}.
\end{equation*}%
For an integer $k$ we have the following relation%
\begin{equation*}
\frac{r}{k!}x^{r-1}\left( \frac{1}{rx^{r-1}}\frac{d}{dx}\right)
^{k+1}\int_{0}^{x}g(u)(x^{r}-u^{r})^{k}du=g(x)
\end{equation*}%
then
\begin{equation*}
R_{k+1}^{-1}=\frac{r}{k!}x^{r-1}\left( \frac{1}{rx^{r-1}}\frac{d}{dx}\right)
^{k+1}x^{1+kr}.
\end{equation*}%
On the other hand%
\begin{eqnarray*}
R_{k+\alpha }g(x) &=&\frac{1}{x^{1+(k+\alpha -1)r}}%
\int_{0}^{x}(x^{r}-u^{r})^{k+\alpha -1}du \\
&=&\frac{1}{x^{1+(k+\alpha -1)r}}%
\int_{0}^{x}(x^{r}-u^{r})^{k}(x^{r}-u^{r})^{\alpha -1}du \\
&=&\frac{1}{x^{kr}}\frac{1}{x^{1+(\alpha -1)r}}\int_{0}^{x}\left[ \frac{d}{du%
}\int_{0}^{u}(x^{r}-s^{r})^{k}ds\right] (x^{r}-u^{r})^{\alpha -1}du
\end{eqnarray*}%
then%
\begin{equation*}
R_{k+\alpha }=\frac{1}{x^{kr}}R_{\alpha }\frac{d}{dx}x^{1+rk}R_{k+1}.
\end{equation*}%
So we have
\begin{equation*}
R_{k+\alpha }^{-1}=R_{k+1}^{-1}\frac{1}{x^{1+rk}}\left( \frac{d}{dx}\right)
^{-1}R_{\alpha }^{-1}x^{kr}
\end{equation*}%
\begin{equation*}
R_{k+\alpha }^{-1}=\frac{r^{2}}{\Gamma (k+1)\Gamma (\alpha )\Gamma (1-\alpha
)}x^{r-1}\left( \frac{1}{rx^{r-1}}\frac{d}{dx}\right) ^{k+1}x^{1-\alpha
r}R_{1-\alpha }x^{(k+\alpha )r}.
\end{equation*}%
The result is then established.
\end{proof}

\section{Hilbertian structure}

We equipped the space $F$ of complex valued functions by the hermitian
scalar product given by:
\begin{equation}  \label{e6}
\left\langle f,g\right\rangle _{a}=\int_{0}^{\infty }\left[
\sum_{m=0}^{r-1}f(w^{m}t)\overline{g(w^{m}t)}\right] t^{a}dt
\end{equation}
where $a$ is a suitable positive real number.\bigskip

We need to list some properties of the resulting hermitian structure. We
begin by showing that the projectors $T_{i}$ given by (\ref{e2}) are then
symmetric . Indeed

\begin{eqnarray*}
\left\langle f,T_{i}g\right\rangle _{a} &=&\int_{0}^{\infty }\left[
\sum_{m=0}^{r-1}f(w^{m}t)\overline{T_{i}g(w^{m}t)}\right] t^{a}dt \\
&=&\frac{1}{r}\int_{0}^{\infty }\left[ \sum_{m=0}^{r-1}\sum_{k=0}^{r-1}%
\overline{w}^{ik}f(w^{m}t)\overline{g(w^{m+k}t)}\right] t^{a}dt \\
&=&\frac{1}{r}\int_{0}^{\infty }\left[ \sum_{k=0}^{r-1}\sum_{m^{\prime
}=0}^{r-1}w^{-ik}f(w^{m^{\prime }-k}t)\overline{g(w^{m^{\prime }}t)}\right]
t^{a}dt \\
&=&\frac{1}{r}\int_{0}^{\infty }\left[ \sum_{m^{\prime
}=0}^{r-1}\sum_{k^{\prime }=0}^{r-1}w^{ik^{\prime }}f(w^{m^{\prime
}+k^{\prime }}t)\overline{g(w^{m^{\prime }}t)}\right] t^{a}dt \\
&=&\left\langle T_{i}f,g\right\rangle _{a}.
\end{eqnarray*}%
As a direct consequence we notice that if $i\neq j$ and $f\in F_{i},g\in
F_{j}$ then we have
\begin{equation*}
\left\langle f,g\right\rangle =\left\langle T_{i}f,T_{j}g\right\rangle
=\left\langle T_{j}T_{i}f,g\right\rangle =0.
\end{equation*}%
One can also verify the following identities
\begin{equation*}
\left\langle f,\frac{1}{x}g\right\rangle _{a}=\left\langle \frac{1}{%
\overline{x}}f,g\right\rangle _{a}
\end{equation*}
and
\begin{equation*}
\left\langle f,xg\right\rangle _{a}=\left\langle \overline{x}%
f,g\right\rangle _{a}.
\end{equation*}

Hence we have:

\begin{proposition}
Let $f$ and $g$ be two complex valued functions such as
\begin{equation*}
\lim_{x\rightarrow 0,\infty }\left[ f(x)\overline{g(x)}x^{a}\right] \quad{%
is~~finite}.
\end{equation*}
Then
\begin{equation*}
\left\langle \frac{d}{dx}f,g\right\rangle _{a} =-\left\langle f,\left( \frac{%
d}{dx}+\frac{a}{\overline{x}}\right) g\right\rangle _{a}.
\end{equation*}
\end{proposition}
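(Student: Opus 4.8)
The plan is to reduce the statement to an ordinary integration by parts in the radial variable $t$, carried out separately on each of the $r$ branches appearing in the definition (\ref{e6}) of $\langle\cdot,\cdot\rangle_a$. First I would write
\[
\left\langle \frac{d}{dx}f,g\right\rangle_a=\int_0^\infty\sum_{m=0}^{r-1}f'(\omega^m t)\,\overline{g(\omega^m t)}\,t^a\,dt,
\]
and on the $m$-th summand replace the derivative of $f$ evaluated at $\omega^m t$ by a derivative in $t$ through the chain rule, using $\frac{d}{dt}f(\omega^m t)=\omega^m f'(\omega^m t)$, i.e. $f'(\omega^m t)=\omega^{-m}\frac{d}{dt}f(\omega^m t)$.

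I would then integrate by parts in $t$, moving $\frac{d}{dt}$ off $f(\omega^m t)$ and onto the factor $\overline{g(\omega^m t)}\,t^a$. This yields a boundary term $\omega^{-m}\big[f(\omega^m t)\overline{g(\omega^m t)}t^a\big]_0^\infty$ together with the bulk integral of $f(\omega^m t)\,\frac{d}{dt}\big[\overline{g(\omega^m t)}\,t^a\big]$. The hypothesis that $f(x)\overline{g(x)}x^a$ has a finite limit as $x\to 0$ and $x\to\infty$ is precisely what lets me discard the boundary contributions branch by branch: along the ray $x=\omega^m t$ this quantity equals $\omega^{-ma}f(\omega^m t)\overline{g(\omega^m t)}t^a$, so its limits control the boundary values up to a fixed phase.

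To identify the bulk term with the right-hand side I would expand
\[
\frac{d}{dt}\big[\overline{g(\omega^m t)}\,t^a\big]=\overline{\tfrac{d}{dt}g(\omega^m t)}\,t^a+a\,\overline{g(\omega^m t)}\,t^{a-1},
\]
rewriting the first summand via the chain rule and $\overline{\omega^m}=\omega^{-m}$ in terms of $\overline{g'(\omega^m t)}$, and the second via $t^{a-1}=\frac1t\,t^a$ together with $\overline{x}=\omega^{-m}t$ on the ray $x=\omega^m t$, so that the factor $\frac{a}{t}$ is exactly the conjugate of $\frac{a}{\overline x}$ evaluated at $\omega^m t$ up to the appropriate phase. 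Recombining the two summands reconstitutes $\overline{\big[(\frac{d}{dx}+\frac{a}{\overline x})g\big](\omega^m t)}$, and summing over $m$ then gives $-\langle f,(\frac{d}{dx}+\frac{a}{\overline x})g\rangle_a$, as claimed.

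The step I expect to be the real obstacle is the bookkeeping of the roots-of-unity phases $\omega^{\pm m}$ that the chain rule and the conjugation of the complex argument $\omega^m t$ introduce at each stage; keeping them consistent across the substitution for $f'$, the differentiation of $g$, and the differentiation of the weight $t^a$ is delicate, and it is exactly this phase accounting that forces $\overline x$ rather than $x$ to appear in the adjoint. By contrast, the analytic input—vanishing of the boundary terms—is routine once the limit hypothesis is invoked on each ray.
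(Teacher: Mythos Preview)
Your integration-by-parts plan is the right skeleton, and it matches the paper's approach, but there is a genuine gap in how you dispose of the boundary terms. You write that the hypothesis ``$f(x)\overline{g(x)}x^a$ has a finite limit as $x\to 0,\infty$'' is what lets you ``discard the boundary contributions branch by branch''. It does not: the hypothesis says these limits are \emph{finite}, not \emph{zero}. On the $m$-th branch the boundary contribution after integration by parts is
\[
\omega^{-m}\Big[f(\omega^m t)\,\overline{g(\omega^m t)}\,t^{a}\Big]_{0}^{\infty},
\]
and a merely finite limit gives no reason for this individual term to vanish.

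What actually kills the boundary terms in the paper's proof is the summation over $m$: the bracket $\big[f(\omega^m t)\overline{g(\omega^m t)}t^a\big]_0^\infty$ is (under the stated hypothesis) the same finite number for each $m$, so the total boundary contribution is
\[
\Big(\sum_{m=0}^{r-1}\omega^{-m}\Big)\Big\{\lim_{t\to\infty}-\lim_{t\to 0}\Big\}\big[f\overline g\,t^a\big]=0,
\]
because $\sum_{m=0}^{r-1}\omega^{-m}=0$ for $r\ge 2$. This algebraic identity is the essential point of the proposition, and it is absent from your argument. Your self-assessment is therefore inverted: the phase bookkeeping in the bulk term is indeed routine (and you describe it correctly), while the ``analytic input'' you call routine is where the actual mechanism---cancellation via the sum of $r$-th roots of unity---lives.
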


\begin{proof}
We have
\begin{eqnarray*}
\left\langle \frac{d}{dx}f,g\right\rangle _{a}
&=&\sum_{m=0}^{r-1}\int_{0}^{\infty }\frac{df}{dx}(w^{m}x)\overline{g(w^{m}x)%
}x^{a}dx \\
&=&\sum_{m=0}^{r-1}\int_{0}^{\infty }\frac{1}{w^{m}}\frac{d}{dx}f(w^{m}x)%
\overline{g(w^{m}x)}x^{a}dx \\
&=&\left( \sum_{m=0}^{r-1}\frac{1}{w^{m}}\right) \left\{ \lim_{x\rightarrow
\infty }\left[ f(x)\overline{g(x)}x^{a}\right] -\lim_{x\rightarrow 0}\left[
f(x)\overline{g(x)}x^{a}\right] \right\} \\
&&-\sum_{m=0}^{r-1}\int_{0}^{\infty }f(w^{m}x)\overline{\left[ \frac{dg}{dx}%
(w^{m}x)+\frac{a}{\overline{w^{m}x}}g(w^{m}x)\right] }x^{a}dx \\
&=&-\left\langle f,\left( \frac{d}{dx}+\frac{a}{\overline{x}}\right)
g\right\rangle _{a}.
\end{eqnarray*}%
This is true because we have

\begin{equation*}
\sum_{m=0}^{r-1}\frac{1}{w^{m}}=0.
\end{equation*}
\end{proof}

Now we are able to determine the adjoint of the $r$-extension of the Riemann
Liouville operator and those related to the $r$- extension of Dunkl operator
:

\begin{proposition}
For $k$ integer and $0<\alpha<1$ the adjoint of the Riemann--Liouville
operator is given by
\begin{equation*}
R_{\alpha }^{\ast }g(u)=\int_{1}^{\infty }g(ut)\left[ t^{r}-1\right]
^{\alpha -1}t^{a-1-r(\alpha-1)}dt
\end{equation*}
and
\begin{eqnarray*}
R_{k+\alpha }^{\ast -1}g(\lambda ) &=&(-1)^{k+1}\frac{r^{1-k}}{\Gamma
(k+1)\Gamma (\alpha )\Gamma (1-\alpha )}\lambda ^{(k+1+\alpha )r-1} \\
&\times& \int_{1}^{\infty }\left( \frac{d}{dx}\frac{1}{x^{r-1}}+\frac{a}{%
x^{r}}\right) ^{k+1}g(\lambda x)\left( x^{r}-1\right) ^{\alpha
-1}x^{a-2-r(\alpha -2)}dx.
\end{eqnarray*}
\end{proposition}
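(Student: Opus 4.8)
The plan is to compute the two adjoints separately, exploiting the three adjointness rules already established in this section: multiplication by $x$ (resp. by $1/x$) has adjoint the multiplication by $\bar x$ (resp. by $1/\bar x$), and, by the previous Proposition, the adjoint of $\frac{d}{dx}$ is $-(\frac{d}{dx}+\frac{a}{\bar x})$. I would obtain $R_\alpha^\ast$ by a bare-hands computation from the definitions, and then deduce $R_{k+\alpha}^{\ast-1}=(R_{k+\alpha}^{-1})^\ast$ by dualizing, factor by factor, the closed form of $R_{k+\alpha}^{-1}$ proved in the inversion theorem above.

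For the first identity I would start from $\langle R_\alpha f,g\rangle_a$, insert $R_\alpha f(x)=\int_0^1 f(xt)(1-t^r)^{\alpha-1}dt$, interchange the $t$- and $x$-integrations (where the integrals converge), and substitute $y=xt$ in the inner integral. This moves the dilation off $f$ and onto $g$: the weight $x^a$ becomes $(y/t)^a t^{-1}$, the summand becomes $\sum_m f(\omega^m y)\overline{g(\omega^m y/t)}$, and reading off the bracket multiplying $f(\omega^m y)$ identifies $(R_\alpha^\ast g)(u)=\int_0^1 g(u/t)(1-t^r)^{\alpha-1}t^{-a-1}dt$. The substitution $t\mapsto 1/t$ then carries $(0,1)$ onto $(1,\infty)$, turns $(1-t^r)^{\alpha-1}$ into $(t^r-1)^{\alpha-1}t^{-r(\alpha-1)}$ and $t^{-a-1}dt$ into $t^{a-1}dt$, producing exactly the stated weight $(t^r-1)^{\alpha-1}t^{a-1-r(\alpha-1)}$.

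For the inverse I would take the factorization $R_{k+\alpha}^{-1}=\frac{r^2}{\Gamma(k+1)\Gamma(\alpha)\Gamma(1-\alpha)}\,x^{r-1}(\frac{1}{rx^{r-1}}\frac{d}{dx})^{k+1}x^{1-\alpha r}R_{1-\alpha}x^{(k+\alpha)r}$ and dualize it. Since the adjoint reverses the order of a composition, the two outer multiplications swap ends, $R_{1-\alpha}$ is replaced by $R_{1-\alpha}^\ast$ (the first identity with $\alpha\mapsto 1-\alpha$), and the middle block gives $[(\frac{d}{dx})^\ast(\frac{1}{rx^{r-1}})^\ast]^{k+1}=\frac{(-1)^{k+1}}{r^{k+1}}(\frac{d}{dx}\frac{1}{\bar x^{r-1}}+\frac{a}{\bar x^{r}})^{k+1}$. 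Each of the $k+1$ derivatives contributes one minus sign, giving the global $(-1)^{k+1}$, while the powers of $r$ combine to $r^{2}/r^{k+1}=r^{1-k}$; this already reproduces both the constant and the operator $(\frac{d}{dx}\frac{1}{x^{r-1}}+\frac{a}{x^{r}})^{k+1}$ appearing in the statement. Applying the resulting operator to $g$, evaluating at the real point $\lambda$ (where $\bar x=x$, so every conjugate disappears), inserting the integral form of $R_{1-\alpha}^\ast$, and then passing the dilation $\lambda$ through the homogeneous operator via $P\,S_\lambda=\lambda^{r}S_\lambda P$ (where $P=\frac{d}{dx}\frac{1}{x^{r-1}}+\frac{a}{x^{r}}$ and $S_\lambda f=f(\lambda\,\cdot)$) collects the outer power $\lambda^{(k+1+\alpha)r-1}$ and leaves a single integral over $(1,\infty)$.

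The hard part will not be conceptual but the exponent bookkeeping in this last step: the three multiplications by $x^{(k+\alpha)r}$, $x^{1-\alpha r}$, $x^{r-1}$, the weight produced by $R_{1-\alpha}^\ast$, and the Jacobian of the change of variables all contribute powers of $x$ and $\lambda$ that must be assembled into the single exponent and the prefactor claimed in the statement; tracking how the dilation commutes through the $(k+1)$-fold product by means of the homogeneity relation $P\,S_\lambda=\lambda^{r}S_\lambda P$ is exactly where care is required. A secondary point is rigor: the adjoint rule $(\frac{d}{dx})^\ast=-(\frac{d}{dx}+\frac{a}{\bar x})$ was established only when the boundary term $f(x)\overline{g(x)}x^a$ is finite at $0$ and $\infty$, so one should record the decay and growth hypotheses on $g$ (and on the intermediate functions produced by the differential block) under which all the integrations by parts are legitimate and the interchanges of integration converge.
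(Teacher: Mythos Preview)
Your proposal is correct and follows essentially the same route as the paper: a direct Fubini computation to identify $R_\alpha^\ast$, followed by a factor-by-factor dualization of the closed form of $R_{k+\alpha}^{-1}$ obtained in the preceding theorem. The paper in fact stops at the operator identity
\[
R_{k+\alpha}^{\ast-1}=(-1)^{k+1}\frac{r^{1-k}}{\Gamma(k+1)\Gamma(\alpha)\Gamma(1-\alpha)}\,\bar x^{(k+\alpha)r}R_{1-\alpha}^{\ast}\,\bar x^{1-\alpha r}\Bigl(\bigl(\tfrac{d}{dx}+\tfrac{a}{\bar x}\bigr)\tfrac{1}{\bar x^{r-1}}\Bigr)^{k+1}\bar x^{r-1}
\]
and simply declares ``which leads to the result,'' so your explanation of how to pass the dilation $S_\lambda$ through the homogeneous block and assemble the exponents into $\lambda^{(k+1+\alpha)r-1}$ and the weight $x^{a-2-r(\alpha-2)}$ actually fills in detail that the paper omits; your caution about the boundary hypotheses needed for the integrations by parts is likewise well placed, since the paper does not discuss them.
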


\begin{proof}
In fact we have
\begin{eqnarray*}
\left\langle R_{\alpha }f,g\right\rangle _{a}
&=&\sum_{m=0}^{r-1}\int_{0}^{\infty }R_{\alpha }f(w^{m}x)\overline{g(w^{m}x)}%
x^{a}dx \\
&=&\sum_{m=0}^{r-1}\int_{0}^{\infty }\left[ \int_{0}^{x}f(w^{m}u)\left[
x^{r}-u^{r}\right] ^{\alpha -1}du\right] \overline{g(w^{m}x)}x^{a-1-r(\alpha
-1)}dx \\
&=&\sum_{m=0}^{r-1}\int_{0}^{\infty }f(w^{m}u)\left[ \int_{u}^{\infty }%
\overline{g(w^{m}x)}\left[ x^{r}-u^{r}\right] ^{\alpha -1}x^{a-1-r(\alpha
-1)}dx\right] du \\
&=&\sum_{m=0}^{r-1}\int_{0}^{\infty }f(w^{m}u)\overline{R_{\alpha }^{\ast
}g(w^{m}u)}du \\
&=&\left\langle f,R_{\alpha }^{\ast }g\right\rangle _{a}.
\end{eqnarray*}%
Therefore%
\begin{eqnarray*}
R_{\alpha }^{\ast }g(u) &=&u^{-a}\int_{u}^{\infty }g(x)\left[ x^{r}-u^{r}%
\right] ^{\alpha -1}x^{a-1-r(\alpha -1)}dx \\
&=&\int_{1}^{\infty }g(ut)\left[ t^{r}-1\right] ^{\alpha -1}t^{a-1-r(\alpha
-1)}dt.
\end{eqnarray*}%
Since we have
\begin{equation*}
R_{k+\alpha }^{-1}=\frac{r^{2}}{\Gamma (k+1)\Gamma (\alpha )\Gamma (1-\alpha
)}x^{r-1}\left( \frac{1}{rx^{r-1}}\frac{d}{dx}\right) ^{k+1}x^{1-\alpha
r}R_{1-\alpha }x^{(k+\alpha )r}
\end{equation*}%
then
\begin{equation*}
\left( R_{k+\alpha }^{-1}\right) ^{\ast }=R_{k+\alpha }^{\ast -1}=(-1)^{k+1}%
\frac{r^{1-k}}{\Gamma (k+1)\Gamma (\alpha )\Gamma (1-\alpha )}\overline{x}%
^{(k+\alpha )r}R_{1-\alpha }^{\ast }\overline{x}^{1-\alpha r}\left( \left(
\frac{d}{dx}+\frac{a}{\overline{x}}\right) \frac{1}{\overline{x}^{r-1}}%
\right) ^{k+1}\overline{x}^{r-1}
\end{equation*}%
which leads to the result.
\end{proof}

\begin{proposition}
The corresponding adjoint of the $r$-extension Dunkl operator namely
\begin{equation*}
D_{\mu }=\frac{d}{dx}+\frac{1}{x}\sum_{k=0}^{r-1}a_{k}T_{k}
\end{equation*}
is given by
\begin{equation*}
D_{\mu }^{\ast }=-\left( \frac{d}{dx}+\frac{1}{\overline{x}}%
\sum_{k=0}^{r-1}(a-a_{k})T_{k+1}\right),
\end{equation*}
where $a$ is the real taking place in the definition of the inner product (%
\ref{e6}).
\end{proposition}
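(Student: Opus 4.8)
The plan is to decompose $D_{\mu}$ into its two natural pieces and take the adjoint of each using the three building blocks already established in this section: the self-adjointness of the projectors $T_{k}$, the identity $\langle f,\frac{1}{x}g\rangle_{a}=\langle \frac{1}{\overline{x}}f,g\rangle_{a}$ (so that multiplication by $1/x$ has adjoint multiplication by $1/\overline{x}$), and the Proposition giving $\left(\frac{d}{dx}\right)^{\ast}=-\left(\frac{d}{dx}+\frac{a}{\overline{x}}\right)$ under the stated boundary hypothesis.

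First I would rewrite the singular part in a more convenient order. Using the commutation $\frac{1}{x}T_{k}=T_{k+1}\frac{1}{x}$ from the list of preliminary properties, I write
\[
D_{\mu}=\frac{d}{dx}+\sum_{k=0}^{r-1}a_{k}\frac{1}{x}T_{k}=\frac{d}{dx}+\sum_{k=0}^{r-1}a_{k}\,T_{k+1}\frac{1}{x}.
\]
Since $(AB)^{\ast}=B^{\ast}A^{\ast}$, and each $T_{k+1}$ is self-adjoint while the $a_{k}$ are real, the adjoint of the typical summand $T_{k+1}\frac{1}{x}$ is $\frac{1}{\overline{x}}T_{k+1}$. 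Adding the adjoint of $\frac{d}{dx}$ I obtain
\[
D_{\mu}^{\ast}=-\left(\frac{d}{dx}+\frac{a}{\overline{x}}\right)+\frac{1}{\overline{x}}\sum_{k=0}^{r-1}a_{k}T_{k+1}.
\]

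The last step is to merge the isolated term $a/\overline{x}$ into the sum. Because $T_{k+r}=T_{k}$ and the projectors resolve the identity, one has $\sum_{k=0}^{r-1}T_{k+1}=\sum_{k=0}^{r-1}T_{k}=\mathrm{id}$, so $\frac{a}{\overline{x}}=\frac{1}{\overline{x}}\sum_{k=0}^{r-1}a\,T_{k+1}$. Substituting this and collecting the two sums gives
\[
D_{\mu}^{\ast}=-\frac{d}{dx}-\frac{1}{\overline{x}}\sum_{k=0}^{r-1}(a-a_{k})T_{k+1}=-\left(\frac{d}{dx}+\frac{1}{\overline{x}}\sum_{k=0}^{r-1}(a-a_{k})T_{k+1}\right),
\]
which is the claimed formula.

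I expect the only delicate points to be bookkeeping rather than conceptual: getting the order of composition right when applying $(AB)^{\ast}=B^{\ast}A^{\ast}$, correctly shifting the index from $T_{k}$ to $T_{k+1}$ \emph{before} conjugating (which is why I prefer to invoke $\frac{1}{x}T_{k}=T_{k+1}\frac{1}{x}$ at the outset rather than afterward), and justifying the resolution of identity $\sum_{k=0}^{r-1}T_{k+1}=\mathrm{id}$ that allows the free $a/\overline{x}$ term to be absorbed. The boundary hypothesis needed for the derivative's adjoint is inherited from the earlier Proposition and would be assumed to hold for the functions in play.
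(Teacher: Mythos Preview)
Your proof is correct and follows essentially the same route as the paper: split $D_{\mu}$ into derivative and singular parts, use the adjoints of $\frac{d}{dx}$, $\frac{1}{x}$ and the self-adjointness of $T_{k}$, then absorb $\frac{a}{\overline{x}}$ into the sum via $\sum_{k}T_{k+1}=\mathrm{id}$. The only cosmetic difference is that you perform the index shift $\frac{1}{x}T_{k}=T_{k+1}\frac{1}{x}$ \emph{before} taking adjoints, whereas the paper first passes to the adjoint $T_{k}\frac{1}{\overline{x}}$ and then invokes $T_{k}\frac{1}{\overline{x}}=\frac{1}{\overline{x}}T_{k+1}$; both orderings land on the same term $\frac{1}{\overline{x}}T_{k+1}$.
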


\begin{proof}
We performs the following calculation
\begin{eqnarray*}
&&\left\langle \left( \frac{d}{dx}+\frac{1}{x}\sum_{k=0}^{r-1}a_{k}T_{k}%
\right) f,g\right\rangle _{a}=\left\langle \frac{d}{dx}f,g\right\rangle
_{a}+\left\langle \frac{1}{x}\sum_{k=0}^{r-1}a_{k}T_{k}f,g\right\rangle _{a}
\\
&=&-\left\langle f,\frac{d}{dx}g\right\rangle _{a}-\left\langle f,\frac{a}{%
\overline{x}}g\right\rangle _{a}+\left\langle f,\sum_{k=0}^{r-1}a_{k}T_{k}%
\frac{1}{\overline{x}}g\right\rangle _{a} \\
&=&-\left\langle f,\frac{d}{dx}g\right\rangle _{a}-\left\langle f,\frac{1}{%
\overline{x}}\sum_{k=0}^{r-1}aT_{k+1}g\right\rangle _{a}+\left\langle f,%
\frac{1}{\overline{x}}\sum_{k=0}^{r-1}a_{k}T_{k+1}g\right\rangle _{a} \\
&=&-\left\langle f,\left( \frac{d}{dx}+\frac{1}{\overline{x}}%
\sum_{k=0}^{r-1}\left( a-a_{k}\right) T_{k+1}\right) g\right\rangle _{a}.
\end{eqnarray*}%
This proves the result.
\end{proof}

\textbf{Example 6 :} $r=2,w=-1,\theta =i,\ \mu =(0,\alpha )$\bigskip

We choose $a=2\alpha +1$ then we get%
\begin{equation*}
\left\langle f,g\right\rangle =\int_{0}^{\infty }\left[ f(t)g(t)+f(-t)g(-t)%
\right] t^{2\alpha +1}dt=\int_{-\infty }^{\infty }f(t)g(t)\left\vert
t\right\vert ^{2\alpha +1}dt.
\end{equation*}%
On the other hand the Dunkl operator is given by
\begin{equation*}
D_{\alpha }=\frac{d}{dx}+\frac{2\alpha +1}{x}T_{1}
\end{equation*}%
which implies%
\begin{equation*}
D_{\alpha }^{\ast }=-\left( \frac{d}{dx}+\frac{2\alpha +1}{x}T_{1}\right)
=-D_{\alpha }.
\end{equation*}

\bigskip

\textbf{Example 7 :} $r=3,w=e^{i\frac{2\pi }{3}}=j,\theta =e^{i\frac{\pi }{3}%
},\ \mu =\left( 0,v-\frac{1}{3},-\frac{2}{3}\right) $\newline

We choose $a=3v.$ The Dunkl operator is given by%
\begin{equation*}
D_{v}=\frac{d}{dx}+\frac{3v}{x}T_{1}
\end{equation*}%
which implies%
\begin{equation*}
D_{v}^{\ast }=-\left( \frac{d}{dx}+\frac{3v}{x}T_{1}\right) =-D_{v}.
\end{equation*}

We note that in general we have $D_{\mu }^{\ast }\neq -D_{\mu }$; the
equality depends of a suitable choice of the real $a$.

\section{Transmutation operator $V_{\protect\mu }$}

An interesting topics is to seek an operator $V_\mu$ (see \cite{De,Ro} for
the classical one case $r=2$) which transforms $e^{\theta x}$ into $E_{\mu
}(x).$ To make this section self containing we recall some properties shown
early .
\begin{equation*}
R_{\alpha }T_{i}=T_{i}R_{\alpha },\text{ \ }T_{i}\frac{1}{x}=\frac{1}{x}%
T_{i-1},\text{ \ }T_{i}x=xT_{i+1},\text{ \ }T_{i+r}=T_{i},\text{ \ }%
T_{i}^{2}=T_{i},\text{ \ }T_{i}T_{j}=0\text{ if }i\neq j
\end{equation*}

\begin{theorem}
\label{t2} The transmutation kernel $V_\mu$ has the following form
\begin{multline}  \label{e7}
V_{\mu }=c_{\mu }T_{0}\prod_{i=0}^{r-1}\left( \frac{1}{x^{r-(i+1)}}R_{\alpha
_{i}+\frac{i}{r}-1}x^{r-(i+1)}\right) \\
+c_{\mu }\sum_{k=1}^{r-1}\sum_{j=0}^{k}\frac{P_{j}}{\theta ^{j}}T_{k}\frac{1%
}{x^{k}}\prod_{i=0}^{r-1}\left( \frac{1}{x^{r-(i+1)}}R_{\alpha _{i}+\frac{i}{%
r}-1}x^{r-(i+1)}\right) x^{k-j}.
\end{multline}
where
\begin{equation*}
P_{k-s}=\frac{1}{s!}\sum_{j=0}^{s}(-1)^{s-j}C_{s-j}^{j}%
\prod_{i=0}^{k-1}(a_{i}+i+j).
\end{equation*}
\end{theorem}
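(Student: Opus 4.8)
The plan is to read off $V_{\mu}$ from the integral representation of $E_{\mu}$ obtained in the previous theorem, turning the $u$-integration into the Riemann--Liouville operators. Throughout write $\Pi=\prod_{i=0}^{r-1}\bigl(\frac{1}{x^{r-(i+1)}}R_{\alpha_i+\frac{i}{r}-1}\,x^{r-(i+1)}\bigr)$ for the operator occurring in (\ref{e8}). Everything rests on the identity
\[
\int_{[0,1]^r}G(xu_r)\,w_{\mu}(u)\,du=\Pi\,G(x),
\]
valid for any admissible $G$, which is precisely the manipulation behind (\ref{e8}), since $\frac{1}{x^{r-(i+1)}}R_{\alpha_i+\frac{i}{r}-1}[x^{r-(i+1)}G](x)=\int_0^1 u_i^{\,r-(i+1)}G(xu_i)(1-u_i^r)^{\alpha_i+\frac{i}{r}-1}\,du_i$. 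Feeding the representation of $E_{\mu}$ into this and pulling each projector $T_k$ through the $u$-integral (legitimate because $T_k$ acts only by dilation in the $x$-variable and so commutes with integration in $u$), the $k=0$ summand of $E_{\mu}$ becomes $c_{\mu}T_0\Pi$ applied to $e_{\theta}$, which is the first term of (\ref{e7}).

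For $k\ge 1$ the task is to pass the differential block $L_{a_{k-1}}\circ\cdots\circ L_{a_0}$ across the $u$-integral. I would first prove, by induction on $k$, the operatorial expansion
\[
L_{a_{k-1}}\circ\cdots\circ L_{a_0}=\sum_{j=0}^{k}P_j\,\frac{1}{x^{j}}\Bigl(\frac{d}{dx}\Bigr)^{k-j},
\]
the inductive step reducing to $\frac{d}{dx}\bigl(\frac{1}{x^{j}}(\frac{d}{dx})^{k-j}\bigr)=\frac{1}{x^{j}}(\frac{d}{dx})^{k-j+1}-\frac{j}{x^{j+1}}(\frac{d}{dx})^{k-j}$ together with the $\frac{a_k}{x}$ term, which produces a two-term recursion in $j$ with $P_0=1$. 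Applying this to $e_{\theta}(xu_r)=e^{\theta x u_r}$ replaces $(\frac{d}{dx})^{k-j}$ by $(\theta u_r)^{k-j}$, so that $L_{a_{k-1}}\cdots L_{a_0}e^{\theta x u_r}=\sum_{j=0}^{k}P_j\,\theta^{k-j}u_r^{k-j}\,x^{-j}\,e^{\theta x u_r}$.

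It then remains to integrate in $u$. Writing $u_r^{k-j}G(xu_r)=x^{-(k-j)}(xu_r)^{k-j}G(xu_r)$ and using the basic identity above with $G(y)=y^{k-j}e_{\theta}(y)$ gives $\int_{[0,1]^r}u_r^{k-j}e^{\theta x u_r}w_{\mu}(u)\,du=x^{-(k-j)}\,\Pi\,[x^{k-j}e_{\theta}](x)$. Pulling the scalar powers $x^{-j}$ out of the integral and combining them with this $x^{-(k-j)}$ produces the common factor $x^{-k}$, while the prefactor $\frac{1}{\theta^{k}}$ of the $k$-th summand combines with $\theta^{k-j}$ to give $\theta^{-j}$; reinstating $T_k$ one obtains exactly $c_{\mu}\sum_{k=1}^{r-1}\sum_{j=0}^{k}\frac{P_j}{\theta^{j}}T_k\frac{1}{x^{k}}\Pi\,x^{k-j}$, the second term of (\ref{e7}).

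The routine ingredients are the derivative identity of the inductive step and the interchange of the operators $T_k$ and of the multiplications $x^{\pm m}$ with the $u$-integral. The genuine obstacle is the explicit evaluation of the coefficients: unrolling the recursion $P_j^{(k+1)}=P_j^{(k)}+(a_k-j+1)P_{j-1}^{(k)}$ yields the position-shifted symmetric expression $P_j=\sum_{0\le i_1<\cdots<i_j\le k-1}\prod_{\ell=1}^{j}(a_{i_\ell}-\ell+1)$, and the work is to rewrite this in the closed, binomial form displayed for $P_{k-s}$ in the statement. Because the shift $-\ell+1$ depends on the rank within the chosen subset rather than on the index $i_\ell$ itself, this is a Stirling-type change of basis, and it is exactly this step that produces the alternating binomial coefficients. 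Once the coefficients are matched the theorem is established.
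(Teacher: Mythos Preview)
Your argument is correct and follows the paper's own proof essentially step for step: start from the integral representation of $E_\mu$, expand $L_{a_{k-1}}\cdots L_{a_0}=\sum_{j}P_j\,x^{-j}(d/dx)^{k-j}$, evaluate on $e_\theta(xu_r)$ to produce the factors $u_r^{k-j}$, and then convert the $u$-integral with the extra $u_r^{k-j}$ into $\frac{1}{x^{k-j}}\Pi\,x^{k-j}$. The only difference is that where you flag the closed form of $P_{k-s}$ as the ``genuine obstacle'' and sketch a recursion/Stirling-type derivation, the paper simply rewrites $L_{a_{k-1}}\cdots L_{a_0}=x^{-k}\prod_{i=0}^{k-1}(x\frac{d}{dx}+a_i+i)$ and then quotes the explicit coefficients from Klyuchantsev~\cite{K}.
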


\begin{proof}
We start with the representation integral (\ref{e8}) and since we can write
\begin{equation*}
L_{a_{k-1}}\ldots L_{a_{0}}=\sum_{j=0}^{k}P_{j}\frac{1}{x^{j}}\left( \frac{d%
}{dx}\right) ^{k-j}.
\end{equation*}
The constants $P_{j}$ will be explained later. So we have
\begin{eqnarray*}
\left( \sum_{k=0}^{r-1}\frac{1}{\theta ^{k}}T_{k}L_{a_{k-1}}\ldots
L_{a_{0}}\right) e_{\theta }\left( xu_{r}\right) &=&\sum_{k=0}^{r-1}\frac{1}{%
\theta ^{k}}T_{k}\left( \sum_{j=0}^{k}P_{j}\frac{1}{x^{j}}\left( \frac{d}{dx}%
\right) ^{k-j}e_{\theta }\left( xu_{r}\right) \right) \\
&=&\sum_{k=0}^{r-1}T_{k}\left( \sum_{j=0}^{k}\frac{P_{j}}{\theta ^{j}}\frac{1%
}{x^{j}}u_{r}^{k-j}\right) e_{\theta }\left( xu_{r}\right).
\end{eqnarray*}
Hence%
\begin{eqnarray*}
E_{\mu }(x) &=&c_{\mu }\int_{[0,1]^{r}}\left( T_{0}+\sum_{k=1}^{r-1}\frac{1}{%
\theta ^{k}}T_{k}L_{a_{k-1}}\ldots L_{a_{0}}\right) e_{\theta }\left(
xu_{r}\right) w_{\mu }(u)du \\
&=&c_{\mu }\int_{[0,1]^{r}}\left( T_{0}+\sum_{k=1}^{r-1}T_{k}\left(
\sum_{j=0}^{k}\frac{P_{j}}{\theta ^{j}}\frac{1}{x^{j}}u_{r}^{k-j}\right)
\right) e_{\theta }\left( xu_{r}\right) w_{\mu }(u)du.
\end{eqnarray*}%
The transmutation operator $V_{\mu }$ is written as
\begin{eqnarray*}
V_{\mu }g(x) &=&c_{\mu }\int_{[0,1]^{r}}\left(
T_{0}+\sum_{k=1}^{r-1}T_{k}\left( \sum_{j=0}^{k}\frac{P_{j}}{\theta ^{j}}%
\frac{1}{x^{j}}u_{r}^{k-j}\right) \right) g\left( xu_{r}\right) w_{\mu }(u)du
\\
&=&c_{\mu }\left[ T_{0}\int_{[0,1]^{r}}g\left( xu_{r}\right) w_{\mu
}(u)du+\sum_{k=1}^{r-1}\sum_{j=0}^{k}T_{k}\frac{P_{j}}{\theta ^{j}}\frac{1}{%
x^{j}}\int_{[0,1]^{r}}g\left( xu_{r}\right) w_{\mu }(u)u_{r}^{k-j}du\right].
\end{eqnarray*}%
Note that%
\begin{equation*}
\int_{\lbrack 0,1]^{r}}g\left( xu_{r}\right) w_{\mu
}(u)du=\prod_{i=0}^{r-1}\left( \frac{1}{x^{r-(i+1)}}R_{\alpha _{i}+\frac{i}{r%
}-1}x^{r-(i+1)}\right) g(x),
\end{equation*}%
then%
\begin{eqnarray*}
\int_{\lbrack 0,1]^{r}}g\left( xu_{r}\right) w_{\mu }(u)u_{r}^{k-j}du &=&%
\frac{1}{x^{k-j}}\int_{[0,1]^{r}}g\left( xu_{r}\right) \left( xu_{r}\right)
^{k-j}w_{\mu }(u)du \\
&=&\frac{1}{x^{k-j}}\prod_{i=0}^{r-1}\left( \frac{1}{x^{r-(i+1)}}R_{\alpha
_{i}+\frac{i}{r}-1}x^{r-(i+1)}\right) x^{k-j}g(x).
\end{eqnarray*}%
Finally%
\begin{eqnarray*}
V_{\mu } &=&c_{\mu }T_{0}\prod_{i=0}^{r-1}\left( \frac{1}{x^{r-(i+1)}}%
R_{\alpha _{i}+\frac{i}{r}-1}x^{r-(i+1)}\right) \\
&&+c_{\mu }\sum_{k=1}^{r-1}\sum_{j=0}^{k}\frac{P_{j}}{\theta ^{j}}T_{k}\frac{%
1}{x^{k}}\prod_{i=0}^{r-1}\left( \frac{1}{x^{r-(i+1)}}R_{\alpha _{i}+\frac{i%
}{r}-1}x^{r-(i+1)}\right) x^{k-j}.
\end{eqnarray*}%
In this representation we can remove the components associated with indices $%
\ i$ such that $a_{i}=0$.\bigskip

To explicate the constants $P_{j}$ we recall that
\begin{equation*}
L_{a}=x^{-a}\frac{d}{dx}x^{a}=\frac{d}{dx}+\frac{a}{x}.
\end{equation*}
We check that
\begin{equation*}
\frac{1}{x^{k}}\prod_{i=0}^{k-1}\left( x\frac{d}{dx}+a_{i}+i\right)
=L_{a_{k-1}}\ldots L_{a_{0}.}
\end{equation*}
The use of the modified identity proven by Klushantsev \cite{K}
\begin{equation*}
\frac{1}{x^{k}}\prod_{.j=0}^{k-1}\left( x\frac{d}{dx}+a_{j}+j\right)
=\sum_{j=0}^{k}P_{j}\frac{1}{x^{j}}\left( \frac{d}{dx}\right) ^{k-j},
\end{equation*}%
where
\begin{equation*}
P_{k-s}=\frac{1}{s!}\sum_{j=0}^{s}(-1)^{s-j}C_{s-j}^{j}%
\prod_{i=0}^{k-1}(a_{i}+i+j)
\end{equation*}
leads to the result.
\end{proof}

\textbf{Remark 2 :} Thanks to relation (\ref{e7}) of the Theorem \ref{t2} ,
we can compute the inverse of the operator $V_\mu$ but being given the
complicity of writing we just give $V_\mu^{-1}$ in the case of the following
example.

\bigskip\textbf{Example 8 : }$r=2,w=-1,\theta =i,\ \mu =(0,\alpha )$\bigskip

We have%
\begin{equation*}
R_{\alpha +\frac{1}{2}}g(x)=\int_{0}^{1}g(xu)(1-u^{2})^{\alpha -\frac{1}{2}%
}du,
\end{equation*}
then%
\begin{equation*}
c_{\mu }=c_{\alpha }=2\frac{\Gamma \left( \alpha +1\right) }{\Gamma \left(
\alpha +\frac{1}{2}\right) \Gamma \left( \frac{1}{2}\right) }.
\end{equation*}
The operator $V_\mu=V_\alpha$ is then
\begin{equation*}
V_{\alpha }=c_{\alpha }\left[ T_{0}R_{\alpha +\frac{1}{2}}+T_{1}\frac{1}{x}%
R_{\alpha +\frac{1}{2}}x\right]
\end{equation*}
and we obtain
\begin{equation*}
V_{\alpha }^{-1}=\frac{1}{c_{\alpha }}\left[ R_{\alpha +\frac{1}{2}%
}^{-1}T_{0}+\frac{1}{x}R_{\alpha +\frac{1}{2}}^{-1}xT_{1}\right].
\end{equation*}
That can be justified as follows
\begin{eqnarray*}
V_{\alpha }^{-1}V_{\alpha } &=&R_{\alpha +\frac{1}{2}}^{-1}T_{0}R_{\alpha +%
\frac{1}{2}}+\frac{1}{x}R_{\alpha +\frac{1}{2}}^{-1}xT_{1}\frac{1}{x}%
R_{\alpha +\frac{1}{2}}x \\
&=&R_{\alpha +\frac{1}{2}}^{-1}R_{\alpha +\frac{1}{2}}T_{0}+\frac{1}{x}%
R_{\alpha +\frac{1}{2}}^{-1}x\frac{1}{x}T_{0}R_{\alpha +\frac{1}{2}}x \\
&=&T_{0}+\frac{1}{x}R_{\alpha +\frac{1}{2}}^{-1}R_{\alpha +\frac{1}{2}}T_{0}x
\\
&=&T_{0}+T_{1}=id.
\end{eqnarray*}
On the other hand the adjoint of $V_\alpha$ take the form:%
\begin{eqnarray*}
V_{\alpha }^{\ast } &=&c_{\alpha }\left[ R_{\alpha +\frac{1}{2}}^{\ast
}T_{0}^{\ast }+xR_{\alpha +\frac{1}{2}}^{\ast }\frac{1}{x}T_{1}^{\ast }%
\right] \\
&=&c_{\alpha }\left[ R_{\alpha +\frac{1}{2}}^{\ast }T_{0}+xR_{\alpha +\frac{1%
}{2}}^{\ast }\frac{1}{x}T_{1}\right]
\end{eqnarray*}%
where%
\begin{equation*}
R_{\alpha +\frac{1}{2}}^{\ast }g(u)=\int_{1}^{\infty }g(ut)\left[ t^{2}-1%
\right] ^{\alpha -\frac{1}{2}}tdt,
\end{equation*}%
and
\begin{equation*}
V_{\alpha }^{\ast -1}=\frac{1}{c_{\alpha }}\left[ T_{0}R_{\alpha +\frac{1}{2}%
}^{\ast -1}+T_{1}xR_{\alpha +\frac{1}{2}}^{\ast -1}\frac{1}{x}\right].
\end{equation*}%
\textbf{Example 9 :} $r=3,w=e^{i\frac{2\pi }{3}}=j,\theta =e^{i\frac{\pi }{3}%
},\ \mu =\left( 0,v-\frac{1}{3},-\frac{2}{3}\right) $\bigskip

We have

\begin{equation*}
R_{v}g(x)=\int_{0}^{1}g(xu)(1-u^{3})^{v-1}du
\end{equation*}%
\begin{equation*}
c_{\mu }=c_{v}=3\frac{\Gamma \left( v+\frac{2}{3}\right) }{\Gamma \left(
v\right) \Gamma \left( \frac{2}{3}\right) }.
\end{equation*}%
The operator $V_\mu=V_v$ is given by
\begin{equation*}
V_{v}=c_{v}\left[ T_{0}\frac{1}{x}R_{v}x+T_{1}\frac{1}{x^{2}}R_{v}x^{2}+T_{2}%
\frac{1}{x^{3}}R_{v}x^{3}+\frac{3v}{\theta }T_{2}\frac{1}{x^{3}}R_{v}x^{2}%
\right]
\end{equation*}%
then its inverse is given by
\begin{equation*}
V_{v}^{-1}=\frac{1}{c_{v}}\left[ \frac{1}{x}R_{v}^{-1}xT_{0}+\frac{1}{x^{2}}%
R_{v}^{-1}x^{2}T_{1}+\frac{1}{x^{3}}R_{v}^{-1}x^{3}T_{2}-\frac{3v}{\theta }%
\frac{1}{x^{3}}R_{v}^{-1}x^{2}T_{1}\right] .
\end{equation*}%
Since
\begin{eqnarray*}
V_{v}^{-1}V_{v} &=&\frac{1}{x}R_{v}^{-1}xT_{0}\frac{1}{x}R_{v}x+\frac{1}{%
x^{2}}R_{v}^{-1}x^{2}T_{1}\frac{1}{x^{2}}R_{v}x^{2}+\frac{1}{x^{3}}%
R_{v}^{-1}x^{3}T_{2}\frac{1}{x^{3}}R_{v}x^{3} \\
&&+\frac{3v}{\theta }\frac{1}{x^{3}}R_{v}^{-1}x^{3}T_{2}\frac{1}{x^{3}}%
R_{v}x^{2}-\frac{3v}{\theta }\frac{1}{x^{3}}R_{v}^{-1}x^{2}T_{1}\frac{1}{%
x^{2}}R_{v}x^{2} \\
&=&T_{0}+T_{1}+T_{2}+\frac{3v}{\theta }\frac{1}{x}T_{1}-\frac{3v}{\theta }%
\frac{1}{x}T_{1}=T_{0}+T_{1}+T_{2}=id
\end{eqnarray*}%
On the other hand%
\begin{equation*}
V_{v}^{\ast }=c_{v}\left[ \overline{x}R_{v}^{\ast }\frac{1}{\overline{x}}%
T_{0}+\overline{x}^{2}R_{v}^{\ast }\frac{1}{\overline{x}^{2}}T_{1}+\overline{%
x}^{3}R_{v}^{\ast }\frac{1}{\overline{x}^{3}}T_{2}+\frac{3v}{\theta }%
\overline{x}^{2}R_{v}^{\ast }\frac{1}{\overline{x}^{3}}T_{2}\right]
\end{equation*}%
where%
\begin{equation*}
R_{v}^{\ast }g(u)=\int_{1}^{\infty }g(ut)\left[ t^{3}-1\right] ^{v-1}t^{2}dt.
\end{equation*}

\section{The operators $D_{\protect\mu }$ and $\frac{d}{dx}$}

In this section, we tackle the crucial subject concerning the research of
functional spaces on which the following transmutation relation is valid
\begin{equation*}
D_{\mu }V_{\mu }=V_{\mu }\frac{d}{dx}
\end{equation*}
The first idea that comes to mind is to verify that
\begin{equation}  \label{e9}
D_{\mu }V_{\mu }x^{n}=V_{\mu }\frac{d}{dx}x^{n},\text{ \ \ }\forall n\in
\mathbb{N}
\end{equation}
and when this last fact is true then the transmutation act on the space of
entire function .
\begin{equation*}
D_{\mu }V_{\mu }g(x)=V_{\mu }\frac{d}{dx}g(x),\quad g(x)=\sum_{n=0}^{\infty
}a_{n}x^{n}.
\end{equation*}%
In fact we can permute each of the following operators%
\begin{equation*}
\frac{d}{dx},R_{\alpha },T_{k},x^{k},\frac{1}{x^{k}}
\end{equation*}%
with the infinite sum $\sum_{n=0}^{\infty }.$

\bigskip

We will give two examples and we will constat that in the first, formula (%
\ref{e9}) is true but for the second it is false .

\bigskip\textbf{Example 10 }: $r=2,w=-1,\theta =i,\ \mu =(0,\alpha )$\bigskip

We will prove that formula (\ref{e9}) is true in this case. For this we use
the following result
\begin{equation*}
R_{\alpha }x^{n}=\left( \int_{0}^{1}(1-u^{r})^{\alpha -1}u^{n}du\right)
x^{n}=\frac{1}{r}\frac{\Gamma \left( \frac{n+1}{r}\right) \Gamma \left(
\alpha \right) }{\Gamma \left( \alpha +\frac{n+1}{r}\right) }%
x^{n}=l_{n}^{\alpha }x^{n},
\end{equation*}%
then

\begin{eqnarray*}
D_{\alpha }V_{\alpha }x^{2n} &=&c_{\alpha }\left( \frac{d}{dx}+\frac{2\alpha
+1}{x}T_{1}\right) \left( T_{0}R_{\alpha +\frac{1}{2}}+T_{1}\frac{1}{x}%
R_{\alpha +\frac{1}{2}}x\right) x^{2n} \\
&=&c_{\alpha }\left( \frac{d}{dx}+\frac{2\alpha +1}{x}T_{1}\right)
l_{2n}^{\alpha +\frac{1}{2}}x^{2n}=c_{\alpha }l_{2n}^{\alpha +\frac{1}{2}%
}(2n)x^{2n-1}
\end{eqnarray*}

\begin{eqnarray*}
V_{\alpha }\frac{d}{dx}x^{2n} &=&c_{\alpha }\left( T_{0}R_{\alpha +\frac{1}{2%
}}+T_{1}\frac{1}{x}R_{\alpha +\frac{1}{2}}x\right) (2n)x^{2n-1} \\
&=&c_{\alpha }l_{2n}^{\alpha +\frac{1}{2}}(2n)x^{2n-1}.
\end{eqnarray*}%
On the other hand

\begin{eqnarray*}
D_{\alpha }V_{\alpha }x^{2n+1} &=&c_{\alpha }\left( \frac{d}{dx}+\frac{%
2\alpha +1}{x}T_{1}\right) \left( T_{0}R_{\alpha +\frac{1}{2}}+T_{1}\frac{1}{%
x}R_{\alpha +\frac{1}{2}}x\right) x^{2n+1} \\
&=&c_{\alpha }\left( \frac{d}{dx}+\frac{2\alpha +1}{x}T_{1}\right)
l_{2n+2}^{\alpha +\frac{1}{2}}x^{2n+1}=c_{\alpha }l_{2n+2}^{\alpha +\frac{1}{%
2}}\left[ (2n+1)+\left( 2\alpha +1\right) \right] x^{2n},
\end{eqnarray*}
and
\begin{eqnarray*}
V_{\alpha }\frac{d}{dx}x^{2n+1} &=&c_{\alpha }\left( T_{0}R_{\alpha +\frac{1%
}{2}}+T_{1}\frac{1}{x}R_{\alpha +\frac{1}{2}}x\right) (2n+1)x^{2n} \\
&=&c_{\alpha }l_{2n}^{\alpha +\frac{1}{2}}(2n+1)x^{2n}.
\end{eqnarray*}
To show equality we use the identity

\begin{equation*}
l_{2n}^{\alpha +\frac{1}{2}}(2n+1)=l_{2n+2}^{\alpha +\frac{1}{2}}\left[
(2n+1)+\left( 2\alpha +1\right) \right] .
\end{equation*}

\bigskip\textbf{Example 11 }: $r=3,w=e^{i\frac{2\pi }{3}}=j,\theta =e^{i%
\frac{\pi }{3}},\ \mu =\left( 0,v-\frac{1}{3},-\frac{2}{3}\right) $\bigskip

The transmutation operator is given by
\begin{equation*}
V_{v}=c_{v}\left[ T_{0}\frac{1}{x}R_{v}x+T_{1}\frac{1}{x^{2}}R_{v}x^{2}+T_{2}%
\frac{1}{x^{3}}R_{v}x^{3}+\frac{3v}{\theta }T_{2}\frac{1}{x^{3}}R_{v}x^{2}%
\right].
\end{equation*}%
The 3-extension of Dunkl operator takes the following form%
\begin{equation*}
D_{v}=\frac{d}{dx}+\frac{3v}{x}T_{1}
\end{equation*}%
We check easily that%
\begin{equation*}
\left( \frac{d}{dx}+\frac{3v}{x}T_{1}\right) V_{v}x^{3n}\neq V_{v}\frac{d}{dx%
}x^{3n}.
\end{equation*}

So the spaces of entire function seems not suitable for transmutation for
all $r$ except for the case $r=2$ .

\bigskip

In the following statement we show that the transmutation is true over the
following suitable functional space.

\begin{theorem}
Let $g$ be a continuously differentiable function on an interval $\left[ -%
\frac{T}{2},\frac{T}{2}\right] $ such that
\begin{equation}
\sum_{n=-\infty }^{\infty }\left\vert c_{n}(g)\right\vert e^{\pi \left\vert n%
\mathrm{Im}\left( w^{k}\right) \right\vert }<\infty ,\quad \forall k=0\ldots
r-1  \label{e13}
\end{equation}%
then we have
\begin{equation*}
D_{\mu }V_{\mu }g(x)=V_{\mu }\frac{d}{dx}g(x).
\end{equation*}
\end{theorem}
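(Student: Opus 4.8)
The plan is to expand $g$ in its Fourier series on $\left[-\frac{T}{2},\frac{T}{2}\right]$, reduce the identity to a single exponential mode, and verify the transmutation mode by mode. Write
\[
g(x)=\sum_{n=-\infty}^{\infty}c_{n}(g)\,e^{\gamma_{n}x},\qquad \gamma_{n}=\frac{2i\pi n}{T},
\]
so that $\frac{d}{dx}e^{\gamma_n x}=\gamma_n e^{\gamma_n x}$. The operators building $V_\mu$ and $D_\mu$ --- namely $\frac{d}{dx}$, $R_\alpha$, the projectors $T_k$ (equivalently the $s_k$, which evaluate at the complex points $w^{k}x$), and the multiplications $x^{\pm k}$ --- all act termwise on an absolutely and uniformly convergent series. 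The hypothesis $g\in C^{1}$ together with condition (\ref{e13}) is exactly what guarantees this: for $|x|\le \frac{T}{2}$ one has $\bigl|e^{\gamma_n w^k x}\bigr|\le e^{\pi|n\,\mathrm{Im}(w^k)|}$, so (\ref{e13}) forces $\sum_n c_n(g)e^{\gamma_n w^k x}$ (and, using $c_n(g')=\gamma_n c_n(g)$, the differentiated series) to converge absolutely and uniformly in $x$ for every $k$. This legitimizes interchanging $D_\mu$ and $V_\mu$ with $\sum_n$. It is precisely this step that fails for the monomial expansion of Example 11: monomials are not eigenfunctions of $\frac{d}{dx}$, whereas the exponentials $e^{\gamma_n x}$ are, and that is what makes the termwise argument succeed.

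Granting the interchange, it suffices to prove for every complex $s$ that
\[
D_{\mu}V_{\mu}e^{sx}=s\,V_{\mu}e^{sx}=V_{\mu}\tfrac{d}{dx}e^{sx},
\]
i.e.\ that $V_\mu e^{sx}$ is an eigenfunction of $D_\mu$ for the eigenvalue $s$. (Note this is \emph{not} merely $E_\mu(\lambda x)$ with $s=\theta\lambda$, since $V_\mu$ does not commute with dilations; nonetheless it should lie in the $r$-dimensional eigenspace of $D_\mu-s$.) To see this, decompose $V_\mu e^{sx}=\sum_{k=0}^{r-1}\bigl(V_\mu e^{s\cdot}\bigr)_k$ into its $F_k$-components, read off from the explicit form (\ref{e7}) of $V_\mu$ and the integral representation (\ref{e8}); each component is a regular function, since the singular factors $x^{-j}$ combine with the integral. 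Because $D_\mu$ maps $F_k$ into $F_{k+1}$ and acts there as $L_{a_k}$, the desired eigenvalue equation is equivalent, after matching graded components, to the ladder identities
\[
L_{a_{k-1}}\bigl(V_\mu e^{s\cdot}\bigr)_{k-1}=s\,\bigl(V_\mu e^{s\cdot}\bigr)_{k},\qquad k=0,\ldots,r-1\ (\mathrm{mod}\ r).
\]

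These identities are established by differentiating under the integral sign and integrating by parts in the weight $w_\mu(u)=\prod_{i=0}^{r-1}(1-u_i^{r})^{\alpha_i+i/r-1}u_i^{r-(i+1)}$ of (\ref{e4}). For $k-1<r-1$ the relation follows from the recurrence satisfied by the constants $P_j$ and the action of $L_{a_{k-1}}$ on $e^{sxu_r}$; the closing step $k-1=r-1\mapsto 0$, which must lower the differential order and reproduce the $T_0$-component, is the one requiring a genuine integration by parts. There the boundary contributions vanish because $(1-u_i^{r})^{\alpha_i+i/r-1}$ vanishes at $u_i=1$ and the factor $u_i^{r-(i+1)}$ (or the extra power produced by the parts integration) vanishes at $u_i=0$. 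The case $r=2$ is instructive: writing $V_\alpha e^{sx}=c_\alpha\int_0^1\bigl(\cosh(sxu)+u\sinh(sxu)\bigr)(1-u^2)^{\alpha-1/2}\,du=c_\alpha(P+Q)$ with $P\in F_0$, $Q\in F_1$, one has $L_0P=P'=sQ$ directly, while $L_{2\alpha+1}Q=sP$ follows from one integration by parts, the boundary term vanishing since $(1-u^2)^{\alpha+1/2}|_{u=1}=0$ and $\sinh(sxu)|_{u=0}=0$.

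I expect the \textbf{main obstacle} to be the general-$r$ ladder computation of the previous paragraph: keeping track of the combinatorial constants $P_j$ and verifying that, after integration by parts, the $F_k$-components close up cyclically to produce exactly the factor $s$ (with all boundary terms vanishing, which tacitly requires positivity of the exponents $\alpha_i+i/r$). By contrast, the analytic half --- the termwise interchange --- is routine once (\ref{e13}) is invoked, and assembling the two halves gives, for $g(x)=\sum_n c_n(g)e^{\gamma_n x}$,
\[
D_\mu V_\mu g=\sum_n c_n(g)\,\gamma_n\,V_\mu e^{\gamma_n x}=V_\mu\sum_n c_n(g)\,\gamma_n\,e^{\gamma_n x}=V_\mu\tfrac{d}{dx}g,
\]
as required.
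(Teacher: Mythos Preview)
Your analytic reduction --- expand $g$ in its Fourier series, use condition (\ref{e13}) to dominate the evaluations at the complex points $w^{k}x$ arising from the projectors $T_k$, and thereby justify applying $D_\mu$ and $V_\mu$ term by term --- is exactly the argument the paper gives. Where you diverge is in the treatment of the single exponential mode.

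The paper does not prove $D_\mu V_\mu e^{i\lambda x}=V_\mu\frac{d}{dx}e^{i\lambda x}$ at all in this proof; it simply opens with ``Since we have $D_\mu V_\mu e^{\theta\mu x}=V_\mu\frac{d}{dx}e^{\theta\mu x}$ for all $\mu\in\mathbb{C}$'' and proceeds. The tacit justification is the identification $V_\mu e^{\theta\lambda x}=E_\mu(\lambda x)$ (stated explicitly in Section~8) together with Proposition~3, so that $D_\mu V_\mu e^{\theta\lambda x}=D_\mu E_\mu(\lambda x)=\theta\lambda\,E_\mu(\lambda x)=V_\mu(\theta\lambda e^{\theta\lambda x})$. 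You instead observe that the explicit operator $V_\mu$ in (\ref{e7}) need not commute with dilations (the factors $x^{-j}$ with $j\ge 1$ spoil homogeneity), so $V_\mu e^{\theta\lambda x}$ and $E_\mu(\lambda x)$ are not obviously equal, and you propose to verify the eigenvalue relation directly via graded ladder identities $L_{a_{k-1}}(\,\cdot\,)_{k-1}=s(\,\cdot\,)_k$ and integration by parts against the weight $w_\mu$.

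This is a genuinely more scrupulous route than the paper's: your concern is legitimate, and your $r=2$ computation is correct and is precisely the mechanism that should generalize. What it buys you is an argument that does not rely on the unproved dilation-covariance of $V_\mu$. What the paper's route buys is brevity --- one line instead of a combinatorial computation --- at the cost of leaning on an identity it never verifies. The one caveat on your side is that the general-$r$ ladder step (tracking the constants $P_j$ and closing the cycle $F_{r-1}\to F_0$) is only sketched; you correctly flag this as the main obstacle, and it is indeed where all the work lies, since the $r=2$ case you carry out is exactly the classical Dunkl situation where only $P_0$ survives and a single integration by parts suffices.
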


\begin{proof}
Since we have%
\begin{equation*}
D_{\mu }V_{\mu }e^{\theta \mu x}=V_{\mu }\frac{d}{dx}e^{\theta \mu x},\text{
\ \ }\forall \mu \in \mathbb{C}\Rightarrow D_{\mu }V_{\mu }e^{i\lambda
x}=V_{\mu }\frac{d}{dx}e^{i\lambda x},\text{ \ \ }\forall \lambda \in
\mathbb{C}.
\end{equation*}
Let $g$ be a continuously differentiable function on an interval $\left[
-\frac{T}{2},\frac{T}{2}\right] $ then we have%
\begin{equation*}
g(x)=\sum_{n=-\infty }^{\infty }c_{n}(g)e^{\frac{2i\pi }{T}nx},\quad\forall
t\in \left[ -\frac{T}{2},\frac{T}{2}\right].
\end{equation*}%
The coefficients $c_{n}(g)$ so called the Fourier coefficients of $g$,
defined by the formula%
\begin{equation*}
c_{n}(g)=\frac{1}{T}\int_{-\frac{T}{2}}^{\frac{T}{2}}g(t)e^{\frac{2i\pi }{T}%
nt}dt.
\end{equation*}%
We have
\begin{equation*}
\sum_{n=-\infty }^{\infty }\left\vert c_{n}(g)\right\vert <\infty .
\end{equation*}%
The action of the operator $T_{k}$ at the function $g$ shows in the Fourier
series a terms of the form%
\begin{equation*}
e^{\frac{2i\pi }{T}nw^{k}t},\text{ \ \ }w=e^{\frac{2i\pi }{r}},\text{ \ \ \ }%
0\leq k\leq r-1.
\end{equation*}%
As
\begin{equation*}
\left\vert e^{\frac{2i\pi }{T}nw^{k}t}\right\vert \leq e^{\pi \left\vert n%
\mathrm{Im}\left( w^{k}\right) \right\vert .}
\end{equation*}
So if we impose the condition of normal convergence%
\begin{equation*}
\sum_{n=-\infty }^{\infty }\left\vert c_{n}(g)\right\vert e^{\pi \left\vert n%
\mathrm{Im}\left( w^{k}\right) \right\vert }<\infty,\quad\forall k=0\ldots
r-1
\end{equation*}%
we say that
\begin{equation*}
D_{\mu }V_{\mu }g(x)=V_{\mu }\frac{d}{dx}g(x).
\end{equation*}
\end{proof}

Now we understand why this relationship is verified for $x^{n\text{ }}$ in
the cas $r=2$ because $w=-1$ and then $\mathrm{Im}\left( w^{k}\right) =0%
\mathrm{.}$

\section{r-extension of Dunkl transform}

Before anything let us introduce the integral transform of Laplace type
given for $\theta = i\frac{\pi}{r}$ by :
\begin{equation*}
\mathcal{L}_{\theta }g(\lambda )=\int_{0}^{\infty }e^{\theta t\lambda}g(t)dt.
\end{equation*}

\begin{proposition}
The inversion formula of $\mathcal{L}_{\theta }$ is given by%
\begin{equation*}
\mathcal{L}_{\theta }^{-1}g(x)=\frac{1}{2\pi i\overline{\theta }}%
\lim_{T\rightarrow \infty }\int_{-c\overline{\theta }-i\overline{\theta }%
T}^{-c\overline{\theta }+i\overline{\theta }T}e^{-\theta xs}g(s)ds
\end{equation*}
which is valid for any function of exponential type $\alpha<c$.
\end{proposition}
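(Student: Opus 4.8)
The plan is to recognize $\mathcal{L}_\theta$ as an ordinary Laplace transform read along a rotated spectral line, and then to transport the classical Bromwich (Mellin--Fourier) inversion through that rotation. First I would introduce the ordinary Laplace transform $Lf(s)=\int_0^\infty e^{-st}f(t)\,dt$ and note that the substitution turning $e^{\theta t\lambda}$ into $e^{-ts}$, namely $s=-\theta\lambda$, gives the identity $\mathcal{L}_\theta f(\lambda)=Lf(-\theta\lambda)$. Since $|\theta|=1$ we have $1/\theta=\overline{\theta}$, so this is nothing but a unit rotation of the transform variable. The hypothesis that the pre-image $f$ has exponential type $\alpha<c$ is exactly what guarantees that $Lf$ is holomorphic on the half-plane $\mathrm{Re}\,s>\alpha$ and that the abscissa $\mathrm{Re}\,s=c$ lies in its domain of convergence; this is the setting in which the classical inversion theorem applies.

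Second, I would invoke the Bromwich formula for $L$: for $f$ of exponential type $\alpha<c$ one has
\[
f(x)=\frac{1}{2\pi i}\lim_{T\to\infty}\int_{c-iT}^{c+iT}e^{sx}\,Lf(s)\,ds.
\]
Writing $g=\mathcal{L}_\theta f$, the identity of the first step reads $Lf(s)=g(-\overline{\theta}s)$, and substituting this into the Bromwich integral expresses $f=\mathcal{L}_\theta^{-1}g$ in terms of $g$ alone.

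Third, I would rotate the contour by the change of variable $s=-\theta w$, so that $ds=-\theta\,dw$, $e^{sx}=e^{-\theta wx}$, and $g(-\overline{\theta}s)=g(\overline{\theta}\theta w)=g(w)$ by $\overline{\theta}\theta=1$. The decisive bookkeeping is the image of the vertical segment: as $s$ runs from $c-iT$ to $c+iT$, the point $w=-\overline{\theta}s$ runs from $-c\overline{\theta}+i\overline{\theta}T$ down to $-c\overline{\theta}-i\overline{\theta}T$. Reorienting this segment to match the limits in the statement contributes a sign that cancels the factor $-\theta$ coming from $ds$, leaving the net constant $\theta=1/\overline{\theta}$. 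Collecting the factors yields
\[
\mathcal{L}_\theta^{-1}g(x)=\frac{1}{2\pi i\,\overline{\theta}}\lim_{T\to\infty}\int_{-c\overline{\theta}-i\overline{\theta}T}^{-c\overline{\theta}+i\overline{\theta}T}e^{-\theta xs}\,g(s)\,ds,
\]
which is exactly the claimed formula after renaming the variable of integration.

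The step I expect to be the main obstacle is the contour bookkeeping in the third paragraph: one must track both sign contributions --- the $-\theta$ from $ds=-\theta\,dw$ and the reversal of orientation of the rotated segment --- in order to land precisely on the prefactor $1/(2\pi i\overline{\theta})$ rather than its negative or reciprocal. On the analytic side, the only thing to verify carefully is that the exponential-type condition $\alpha<c$ simultaneously secures the convergence of $\mathcal{L}_\theta f$ and the legitimacy of applying the classical Bromwich formula on the rotated line $\mathrm{Re}\,s=c$.
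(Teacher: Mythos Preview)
Your proposal is correct and follows essentially the same route as the paper: both reduce the inversion to the classical Fourier/Laplace (Bromwich) inversion along the line $s=-c\overline{\theta}+iy\overline{\theta}$, with the exponential-type hypothesis $\alpha<c$ guaranteeing convergence. The only cosmetic difference is that the paper verifies $\mathcal{L}_\theta^{-1}\mathcal{L}_\theta g=g$ by parameterizing that line directly and invoking Fourier inversion for $e^{-ct}g(t)$, whereas you derive the formula by quoting Bromwich and then performing the rotation $s=-\theta w$; your contour bookkeeping is correct.
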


\begin{proof}
To prove this formula, given a function $g$ of exponential type $\alpha<c$.
Then there exists $M>0$ such that
\begin{equation*}
\left\vert g(t)\right\vert \leq Me^{\alpha t},\quad\forall t\in \mathbb{R}.
\end{equation*}
If $s=-c\overline{\theta }+iy\overline{\theta }$ where $c>\alpha $ and $y>0$
we get%
\begin{equation*}
\mathcal{L}_{\theta }g(s)=\int_{0}^{\infty }e^{\theta t(-c\overline{\theta }%
+iy\overline{\theta })}g(t)dt=\int_{0}^{\infty
}e^{iyt}e^{-ct}g(t)dt.
\end{equation*}%
Therefore%
\begin{equation*}
\left\vert \mathcal{L}_{\theta }g(s)\right\vert \leq \int_{0}^{\infty
}e^{-ct}\left\vert g(t)\right\vert dt\leq \int_{0}^{\infty }e^{(\alpha
-c)t}dt<\infty .
\end{equation*}
So we have%
\begin{eqnarray*}
\mathcal{L}_{\theta }^{-1}\mathcal{L}_{\theta }g(x) &=&\frac{1}{2\pi i%
\overline{\theta }}\lim_{T\rightarrow \infty }\int_{-c\overline{\theta }-i%
\overline{\theta }T}^{-c\overline{\theta }+i\overline{\theta }T}e^{-\theta
xs}\mathcal{L}_{\theta }g(s)ds \\
&=&e^{cx}\left[ \frac{1}{2\pi }\lim_{T\rightarrow \infty
}\int_{-T}^{T}e^{-iyx}\left( \int_{0}^{\infty }e^{iyt}e^{-ct}g(t)dt\right) dy%
\right] \\
&=&e^{cx}e^{-cx}g(x)=g(x),\quad\forall x\in \mathbb{R}.
\end{eqnarray*}
which prove the result
\end{proof}

\begin{definition}
For $a>0$ , we define the $r$-extension of the Dunkl transform associated
with the vector $\mu =(\alpha _{0},\alpha _{1}\ldots ,\alpha _{r-1})$ \ as
follows%
\begin{equation*}
\mathcal{F}_{\mu }g(\lambda )=\left\langle g,E_{\mu }(\lambda
x)\right\rangle _{a}=\int_{0}^{\infty }\left[ \sum_{m=0}^{r-1}g(w^{m}t)%
\overline{E_\mu (w^{m}\lambda t)}\right] t^{a}dt
\end{equation*}
where $E_{\mu }$ denote the $r$-Dunkl kernel (\ref{e10}).
\end{definition}

Taking account of the fact that  $E_{\mu}(\lambda x)=V_{\mu }e^{\theta
\lambda x}$ then we can write
\begin{equation*}
\mathcal{F}_{\mu }g(\lambda )=\left\langle g,E_{\mu }(\lambda
x)\right\rangle _{a}=\left\langle g,V_{\mu }e^{\theta \lambda
x}\right\rangle _{a}=\left\langle V_{\mu }^{\ast }g,e^{\theta \lambda
x}\right\rangle _{a}.
\end{equation*}
The integral transform associated with $\mu =(0,-\frac{1}{r},\ldots ,-\frac{%
r-1}{r})$ is given by
\begin{equation*}
\mathcal{F}_{r}g(\lambda )=\left\langle g,e^{\theta \lambda x}\right\rangle
_{0}.
\end{equation*}
This operator coincide with the Laplace transform and Fourier transform
respectively for $r=1$ and $r=2$.\bigskip

We deduce that
\begin{equation*}
\mathcal{F}_{\mu }g(\lambda )=\left\langle V_{\mu }^{\ast }g,e^{\theta
\lambda x}\right\rangle _{a}=\left\langle \left\vert x\right\vert ^{a}V_{\mu
}^{\ast }g,e^{\theta \lambda x}\right\rangle _{0}.
\end{equation*}
Therefore
\begin{equation*}
\mathcal{F}_{\mu }=\mathcal{F}_{r}\left\vert x\right\vert ^{a}V_{\mu }^{\ast
}.
\end{equation*}

\begin{proposition}
Let $g$ be a function of exponential type belongs in $F_{r-k}$ the subspace
defined by (\ref{e11}) then we have
\begin{equation*}
\mathcal{F}_{\mu }^{-1}g(\lambda )=\frac{1}{r}V_{\mu }^{\ast -1}\left\vert
x\right\vert ^{-a}\mathcal{L}_{\theta }^{-1}g(\lambda ).
\end{equation*}
\end{proposition}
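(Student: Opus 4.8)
The plan is to invert the factorization $\mathcal{F}_{\mu}=\mathcal{F}_{r}\,|x|^{a}\,V_{\mu}^{\ast}$ established just above, peeling off one factor at a time. Since $V_{\mu}^{\ast}$ is invertible (its inverse being read off from the explicit form (\ref{e7}), as in the examples of Section~6) and the multiplication operator $|x|^{a}$ has the obvious inverse $|x|^{-a}$, the only genuine content is the inversion of the scalar transform $\mathcal{F}_{r}$. Concretely, I would verify $\mathcal{F}_{\mu}^{-1}\mathcal{F}_{\mu}=\mathrm{id}$ on $F_{r-k}$: writing $\mathcal{F}_{\mu}^{-1}\mathcal{F}_{\mu}=\tfrac{1}{r}V_{\mu}^{\ast -1}|x|^{-a}\mathcal{L}_{\theta}^{-1}\mathcal{F}_{r}\,|x|^{a}V_{\mu}^{\ast}$, the whole statement collapses to the single identity
\begin{equation*}
\mathcal{F}_{r}=r\,\mathcal{L}_{\theta}\qquad\text{on }F_{r-k},
\end{equation*}
after which $\mathcal{L}_{\theta}^{-1}\mathcal{F}_{r}=r\,\mathrm{id}$ absorbs the factor $\tfrac1r$ and the outer $V_{\mu}^{\ast -1}|x|^{-a}(\cdots)|x|^{a}V_{\mu}^{\ast}$ telescopes to the identity. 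For this cancellation to be legitimate I first record that multiplication by $|x|^{a}$ preserves each graded piece $F_{j}$ (since $|wx|=|x|$) and that $V_{\mu}^{\ast}$, being assembled from the grading-respecting building blocks $R_{\alpha}^{\ast}$, $T_{k}$ and the balanced powers of $x$ appearing in (\ref{e7}), also respects the grading; hence $|x|^{a}V_{\mu}^{\ast}$ carries $F_{r-k}$ into $F_{r-k}$, which is exactly the subspace on which the scalar identity is available.

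To prove $\mathcal{F}_{r}=r\,\mathcal{L}_{\theta}$ on $F_{r-k}$, I would unfold the definition
$\mathcal{F}_{r}g(\lambda)=\int_{0}^{\infty}\sum_{m=0}^{r-1}g(w^{m}t)\,\overline{e^{\theta\lambda w^{m}t}}\,dt$
and insert the defining relation of the subspace, namely $g\in F_{r-k}\Leftrightarrow g(w^{m}t)=w^{mk}g(t)$, so as to pull the cyclic sum onto the kernel. The exponential--type hypothesis then permits treating each summand as an integral of an entire function along the ray $w^{m}(0,\infty)$ and rotating that ray back onto the positive real axis by Cauchy's theorem, the circular arcs at infinity contributing nothing thanks to the decay guaranteed in the relevant sectors. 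After rotation the $r$ summands become identical, each equal (up to the conjugation convention in the kernel) to $\int_{0}^{\infty}e^{\theta\lambda t}g(t)\,dt=\mathcal{L}_{\theta}g(\lambda)$, which is precisely what produces the overall factor $r$.

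Finally I would invoke the inversion formula for $\mathcal{L}_{\theta}$ proved in the preceding proposition to read off $\mathcal{F}_{\mu}^{-1}=\tfrac1r V_{\mu}^{\ast -1}|x|^{-a}\mathcal{L}_{\theta}^{-1}$ on the stated class. The main obstacle is the middle step: justifying the simultaneous rotation of all $r$ ray contours together with the vanishing of the boundary arcs. This is exactly where both hypotheses are indispensable, the exponential type controlling the growth while the membership in $F_{r-k}$ fixes the phase $w^{mk}$ that makes the rotated summands coincide rather than cancel; a secondary point demanding care is to keep the conjugation $\overline{e^{\theta\lambda w^{m}t}}$ in the kernel consistent with the $\overline{\theta}$ appearing in the contour defining $\mathcal{L}_{\theta}^{-1}$, so that the two conventions match throughout the rotation.
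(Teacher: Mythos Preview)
Your overall strategy (peel off $V_\mu^\ast$ and $|x|^a$, then invert the scalar transform) matches the paper's, but the step on which you rest the entire argument --- the identity $\mathcal F_r = r\,\mathcal L_\theta$ on a single graded piece --- is false, and the contour-rotation sketch you give for it does not establish it.

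First a bookkeeping point: you check $\mathcal F_\mu^{-1}\mathcal F_\mu=\mathrm{id}$, so the input $f$ lies in $F_k$ (so that $\mathcal F_\mu f\in F_{r-k}$, where $\mathcal F_\mu^{-1}$ is defined). Since $|x|^aV_\mu^\ast$ preserves the grading, the function to which $\mathcal F_r$ is applied lies in $F_k$, not $F_{r-k}$. More seriously, even on $F_k$ one does \emph{not} have $\mathcal F_r=r\,\mathcal L_\theta$. Take $r=2$, $\theta=i$, and $h(t)=e^{-t^2}\in F_0$: then $\mathcal F_2 h(\lambda)=2\int_0^\infty e^{-t^2}\cos(t\lambda)\,dt$, while $2\,\mathcal L_\theta h(\lambda)=2\int_0^\infty e^{-t^2}e^{it\lambda}\,dt$ has a nonzero imaginary part. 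The correct relation, which is what the paper actually proves, is
\[
\mathcal F_r\,T_k \;=\; r\,T_{r-k}\,\mathcal L_\theta\,T_k,
\]
obtained by the purely algebraic observation that for $h\in F_k$ one has $h(\omega^m t)=\omega^{(r-k)m}h(t)$, so the cyclic sum in the kernel is exactly $rT_{r-k}$ acting in the $\lambda$-variable. The projector $T_{r-k}$ cannot be dropped at this stage; it disappears only at the very end because the \emph{target} function $g$ lies in $F_{r-k}$, where $T_{r-k}=\mathrm{id}$.

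Your rotation argument cannot repair this. After the substitution $s=\omega^m t$ each summand becomes $\omega^{-m}\int_{\omega^m[0,\infty)} h(s)e^{\theta s\lambda}\,ds$; if all $r$ rays could legitimately be rotated to $[0,\infty)$, the Jacobian factors $\omega^{-m}$ would remain and sum to zero, not to $r$. In fact the arcs at infinity do not all vanish (the exponential $e^{\theta s\lambda}$ grows in some of the intervening sectors), so the rotations are not simultaneously valid --- which is precisely why $\mathcal L_\theta$ does \emph{not} carry $F_k$ into $F_{r-k}$, and why the projector in the paper's identity is essential. Replace your scalar identity by the paper's projector identity and the rest of your telescoping argument goes through.
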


\begin{proof}
We write the transformation $\mathcal{F}_{r}$ as follows
\begin{equation*}
\mathcal{F}_{r}g(\lambda )=\left\langle g,e^{\theta \lambda x}\right\rangle
_{0}=\int_{0}^{\infty }\left( \sum_{m=0}^{r-1}g(w^{m}t)e^{w^{m}\theta
t\lambda }\right) dt
\end{equation*}%
then
\begin{eqnarray*}
\mathcal{F}_{r}T_{k}g(\lambda ) &=&\int_{0}^{\infty }\left(
\sum_{m=0}^{r-1}T_{k}g(w^{m}t)e^{w^{m}\theta t\lambda }\right) dt \\
&=&\int_{0}^{\infty }\left( \sum_{m=0}^{r-1}w^{(r-k)m}e^{w^{m}\theta
t\lambda }\right) T_{k}g(t)dt \\
&=&rT_{r-k}\mathcal{L}_{\theta }T_{k}g(\lambda ).
\end{eqnarray*}
Furthermore we have

\begin{equation*}
\mathcal{F}_{\mu }T_{k}=\mathcal{F}_{r}\left\vert x\right\vert ^{a}V_{\mu
}^{\ast }T_{k}=\mathcal{F}_{r}T_{k}\left\vert x\right\vert ^{a}V_{\mu
}^{\ast }=rT_{r-k}\mathcal{L}_{\theta }T_{k}\left\vert x\right\vert
^{a}V_{\mu }^{\ast }=rT_{r-k}\mathcal{L}_{\theta }\left\vert x\right\vert
^{a}V_{\mu }^{\ast }T_{k},
\end{equation*}%
then%
\begin{equation*}
\mathcal{F}_{\mu }=\sum_{k=0}^{r-1}\mathcal{F}_{\mu
}T_{k}=r\sum_{k=0}^{r-1}T_{r-k}\mathcal{L}_{\theta }\left\vert x\right\vert
^{a}V_{\mu }^{\ast }T_{k},
\end{equation*}%
which implies
\begin{equation*}
T_{m}\mathcal{F}_{\mu }=rT_{m}\sum_{k=0}^{r-1}T_{r-k}\mathcal{L}_{\theta
}\left\vert x\right\vert ^{a}V_{\mu }^{\ast }T_{k}=rT_{m}\mathcal{L}_{\theta
}\left\vert x\right\vert ^{a}V_{\mu }^{\ast }T_{r-m}.
\end{equation*}
Notice that
\begin{equation*}
\mathcal{F}_{\mu }:F_{k}\rightarrow F_{r-k}.
\end{equation*}%
In the space $F_{k}$ we have the following equality%
\begin{equation*}
\mathcal{F}_{\mu }=rT_{r-k}\mathcal{L}_{\theta }\left\vert x\right\vert
^{a}V_{\mu }^{\ast }.
\end{equation*}
This leads to the result.
\end{proof}

\begin{proposition}
If $D_{\mu }^{\ast }=-D_{\mu }$ then we have
\begin{equation*}
\mathcal{F}_{\mu }D_{\mu }g(\lambda )=-\theta \lambda \mathcal{F}_{\mu
}g(\lambda ).
\end{equation*}
\end{proposition}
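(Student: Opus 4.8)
The plan is to read the identity straight off the defining pairing of $\mathcal{F}_\mu$, using two ingredients already established: the eigenfunction property of the $r$-Dunkl kernel, namely $D_\mu E_\mu(\lambda x)=\theta\lambda E_\mu(\lambda x)$, and the integration-by-parts structure of $\langle\cdot,\cdot\rangle_a$ that produced the formula for $D_\mu^{\ast}$. First I would write
\[
\mathcal{F}_\mu(D_\mu g)(\lambda)=\langle D_\mu g,\,E_\mu(\lambda x)\rangle_a,
\]
and transfer the operator onto the kernel by invoking the hypothesis $D_\mu^{\ast}=-D_\mu$:
\[
\langle D_\mu g,\,E_\mu(\lambda x)\rangle_a=\langle g,\,D_\mu^{\ast}E_\mu(\lambda x)\rangle_a=-\langle g,\,D_\mu E_\mu(\lambda x)\rangle_a.
\]

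Next I would substitute the eigenrelation $D_\mu E_\mu(\lambda x)=\theta\lambda E_\mu(\lambda x)$ and factor the constant eigenvalue out of the pairing, reaching
\[
\mathcal{F}_\mu(D_\mu g)(\lambda)=-\theta\lambda\,\langle g,\,E_\mu(\lambda x)\rangle_a=-\theta\lambda\,\mathcal{F}_\mu g(\lambda),
\]
which is the claim. For this chain to be legitimate, $g$ must be restricted to the class for which the adjoint identity genuinely holds, i.e. the regularity and decay that make the boundary terms $\lim_{x\to 0,\infty}\bigl[f(x)\overline{g(x)}x^a\bigr]$ vanish in the integration-by-parts proposition; functions of exponential type, as used throughout this section, are the natural candidates, so I would state the result with that standing assumption.

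The step I expect to demand the most care is the bookkeeping of the conjugation in $\langle\cdot,\cdot\rangle_a$, since the pairing is antilinear in its second slot: extracting the scalar $\theta\lambda$ from $\langle g,\,D_\mu E_\mu(\lambda x)\rangle_a$ has to be carried out consistently with that antilinearity, and a careless pull-out would instead produce $\overline{\theta\lambda}$. I would fix the convention by testing the whole chain in the classical case $r=2$, $\theta=i$, where $D_\alpha^{\ast}=-D_\alpha$ indeed holds and the relation must reduce to the known intertwining of the ordinary Dunkl operator with its transform; this check confirms that the multiplier emerging is precisely $\theta\lambda$ and pins down the sign. Beyond this conjugation check, the argument is a short formal manipulation resting entirely on the eigenfunction proposition for $E_\mu$ and on the adjoint computation, with no new estimate required.
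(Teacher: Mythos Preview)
Your proposal is correct and follows essentially the same line as the paper: move $D_\mu$ across the pairing via the adjoint relation, then use that the kernel is an eigenfunction. The only cosmetic difference is that the paper routes the eigenrelation through $E_\mu=V_\mu e^{\theta\lambda x}$ and the transmutation identity $D_\mu V_\mu=V_\mu\frac{d}{dx}$ rather than invoking $D_\mu E_\mu(\lambda x)=\theta\lambda E_\mu(\lambda x)$ directly; your concern about the conjugation in the second slot is well taken, and the paper in fact pulls out $\theta\lambda$ unconjugated just as you do.
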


\begin{proof}
In fact
\begin{eqnarray*}
\mathcal{F}_{\mu }D_{\mu }^{\ast }g(\lambda ) &=&\left\langle D_{\mu }^{\ast
}g,E_{\mu }(\lambda x)\right\rangle _{a}=\left\langle D_{\mu }^{\ast
}g,V_{\mu }e^{\theta \lambda x}\right\rangle _{a} \\
&=&\left\langle g,D_{\mu }V_{\mu }e^{\theta \lambda x}\right\rangle
_{a}=\left\langle g,V_{\mu }\frac{d}{dx}e^{\theta \lambda x}\right\rangle
_{a} \\
&=&\theta \lambda \left\langle g,E_{\mu }(\lambda x)\right\rangle
_{a}=\theta \lambda F_{\mu }g(\lambda ).
\end{eqnarray*}
In the case $D_{\mu }^{\ast }=-D_{\mu }$ we obtain the result.\bigskip

Note that the function $x\mapsto e^{\theta \lambda x}$ is a continuously
differentiable function which satisfies (\ref{e13}).
\end{proof}

\bigskip\textbf{Epilogue}\bigskip

We just built an $r$-extension of Dunkl operator focusing on examples. This
approach is very positive and encourages researchers to determine adequate
harmonic analysis and especially look for applications.\newline
In forthcoming papers we will study in great detail the associated heat and
wave equations.

\bigskip\textbf{Appendix}\bigskip

The one-dimensional specialization of the Dunkl-Opdam
operators defined in \cite[p.20]{D3} is a particular case of this introduced
in our paper. We begin by recalling that for a fixed $r=1,2,\ldots $ the
complex reflection group $W$ of type $G\left( r,1,N\right) $ is generated by
the $N\times N$ permutation matrices with the nonzero entries being powers
of $\omega =e^{2i\pi /r},$ an $r^{th}$ root of unity, and by the complex
reflection $\tau _{i}$ defined by
\begin{equation*}
x\tau _{i}=\left( x_{1},\ldots ,\overset{i}{\omega x_{i}},\ldots \right)
,\quad 1\leq i\leq N
\end{equation*}%
An element $w$ of the groups $W$ acting on a complex valued function $f$ as
follows%
\begin{equation*}
wf(x)=f(wx).
\end{equation*}
Given a list of complex numbers $\kappa =\left( \kappa _{0},\ldots ,\kappa
_{r-1}\right) .$ The Dunkl-Opdam operators for complex reflection groups $W$
 defined by
\begin{equation*}
T_{i}\left( \kappa \right) =\frac{\partial }{\partial x_{i}}+\kappa
_{0}\sum_{j\neq i}\sum_{s=0}^{r-1}\frac{1-\tau _{i}^{-s}\left( i,j\right)
\tau _{i}^{s}}{x_{i}-\omega ^{s}x_{j}}+\sum_{t=1}^{r-1}\kappa
_{t}\sum_{s=0}^{r-1}\frac{\omega ^{-st}\tau _{i}^{s}}{x_{i}},
\end{equation*}%
where $\left( i,j\right) $ is a transposition
\begin{equation*}
x\left( i,j\right) =\left( x_{1},\ldots ,\overset{i}{x_{j}},\ldots ,\overset{%
j}{x_{i}},\ldots \right) .
\end{equation*}%
The one-dimensional specialization of this operators is then
\begin{eqnarray*}
T\left( \kappa \right)  &=&\frac{d}{dx}+\frac{1}{x}\sum_{t=1}^{r-1}\kappa
_{t}\left( \sum_{s=0}^{r-1}\omega ^{-st}\tau ^{s}\right)  \\
&=&\frac{d}{dx}+\frac{1}{x}\sum_{s=0}^{r-1}\left( \sum_{t=1}^{r-1}\kappa
_{t}\omega ^{-st}\right) \tau ^{s},
\end{eqnarray*}
where $\tau$  is a complex reflection given by
\begin{equation*}
\tau f(x)=f(\omega x).
\end{equation*}
The operators introduced in our paper can be written in the following form
\begin{eqnarray*}
D_{\mu } &=&\frac{d}{dx}+\frac{1}{x}\sum_{t=0}^{r-1}a_{t}T_{t} \\
&=&\frac{d}{dx}+\frac{1}{x}\sum_{t=0}^{r-1}a_{t}\left( {\frac{1}{r}}%
\sum_{s=0}^{r-1}s_{t}^{s}\right)  \\
&=&\frac{d}{dx}+\frac{1}{x}\sum_{s=0}^{r-1}\left( {\frac{1}{r}}%
\sum_{t=0}^{r-1}a_{t}\omega ^{st}\right) \tau ^{s}.
\end{eqnarray*}%
If we have
\begin{equation*}
{\frac{1}{r}}\sum_{t=0}^{r-1}a_{t}\omega ^{st}=\sum_{t=1}^{r-1}\kappa
_{t}\omega ^{-st},\quad s=0\ldots r-1
\end{equation*}%
then we obtain
\begin{equation*}
D_{\mu }=T\left( \kappa \right).
\end{equation*}%
Hence, to reduce an operator $D_{\mu }$ to a fixed operator $T\left( \kappa
\right) $ it is necessary to solve a linear system of  $r$ indeterminate $%
\left( a_{0},\ldots ,a_{r-1}\right) $ and $r$ equations $\left( s=0,\ldots
,r-1\right) $. There's  one and unique solution because
\begin{equation*}
\left( \omega ^{st}\right) _{0\leq t,s\leq n}\in GL_{n}(\mathbb{C)}
\end{equation*}
Conversely, if we want to reduce an operator $T(\kappa) $ to a fixed operator $D_{\mu }$ then it is necessary to solve a
linear system of $r-1$ indeterminate $\left( \kappa _{1},\ldots ,\kappa
_{r-1}\right) $ and $r$ equations $\left( s=0,\ldots ,r-1\right) $. In
general there's no solution. This prove that the operator introduced in our
paper is a generalization of the Dunkl-Opdam operators in one dimension.


\begin{thebibliography}{99}
\bibitem{B} Y. Ben Cheikh; Decomposition of the Bessel functions with
respect to cyclic group of order $n$, Matematiche, Volume 52, Article
365-378 (1997).

\bibitem{C} F. M. Cholewinski and J. A. Reneke; The generalized Airy
diffusion equation, Electron. J. Differential Equations, Volume 87, Article
1-64 (2003).

\bibitem{D1} C. F. Dunkl; Differential-difference operators associated to
reflection groups, Trans. Amer. Math. Soc, Volume 311, Article 167-183
(1989).

\bibitem{D2} C. F. Dunkl; Integral kernels with reflection group invariance,
Canad. J. Math, Volume 43, Number 6, Article 1213-1227 (1991).

\bibitem{D3} C. F. Dunkl and E. M. Opdam; Dunkl operators for complex
reflection groups, Proc. London Math. Soc, Volume 86, Article 70-108 (2003).

\bibitem{De} M. De Jeu; The Dunkl transform, Invent. Math, Volume 113,
Number 1, Article 147-162 (1993).

\bibitem{E} A. Erdeley; Higher Transcendental Functions, McGraw-Hill. New
York, Volume 2,3 (1953).

\bibitem{F1} A. Fitouhi, N. H. Mahmoud and S. A. Ould Ahmed Mahmoud;
Polynomial expansions for solutions of higher-order Bessel heat equations,
J. Math. Anal. Appl, Volume 206, Article 155-167 (1997).

\bibitem{F2} A. Fitouhi, M. S. Ben Hammouda and W. Binous; On a third
singular differential operator and transmutation, F. J. M. S, Volume 21,
Issue 3, Article 303-329 (2006).

\bibitem{K} M. I. Klyuchanstsev; Singular differential operators with $r-1$
parameters and Bessel function of vector index, Siberian Mathematical
Journal, Volume 24, Number 3, Article 353-367 (1983).

\bibitem{Ri} P. F. Ricci; Le funzioni pseudo-iperboliche
epseudo-trogonometriche, Publ. Insti. mat. Appl. Ing. Univ. Roma, Volume 12,
Article 27-49 (1978).

\bibitem{Ro} M. Rosler; Dunkl operators: theory and applications, in
orthogonal polynomials and special functions, Springer Lect. Notes Math,
Volume 1817, Article 93-135 (2003).

\bibitem{W} G. N. Watson; A treatise on the theory of Bessel Functions,
Cambridge Univ. Press. London. New York, 2nd ed (1944).
\end{thebibliography}
\end{document}